\documentclass[reqno, 10pt, a4paper]{amsart}

\usepackage{fullpage}
\linespread{1.3}\selectfont



\usepackage[utf8]{inputenc}
\usepackage[OT2,T1]{fontenc}
\usepackage[english]{babel}

\usepackage{amsmath}
\usepackage{amsfonts}
\usepackage{amssymb}
\usepackage{amsthm}

\usepackage{mathrsfs}
\usepackage{listings}
\lstset{basicstyle=\ttfamily}

\usepackage[mathcal]{euscript}
\usepackage{bbm}

\usepackage{algorithm}
\usepackage{algorithmic}

\usepackage{enumitem}
\usepackage{multirow}

\usepackage[dvipsnames]{xcolor}
\usepackage{graphicx}
\usepackage{tikz}
\usetikzlibrary{matrix}
\usepackage[all]{xy}

\usepackage{colortbl}
\definecolor{mygray}{gray}{0.92}

\usepackage{array}
\newcolumntype{C}[1]{>{\centering\arraybackslash$}p{#1}<{$}}

\usepackage{breqn}
\newcounter{myequation}[equation]

\usepackage{float}
\restylefloat{table}
\usepackage{multirow}

\usepackage{hyperref}

\theoremstyle{plain}
\newtheorem{theorem}{Theorem}[section]

\newtheorem{proposition}[theorem]{Proposition}
\newtheorem{lemma}[theorem]{Lemma}

\theoremstyle{definition}
\newtheorem{definition}[theorem]{Definition}

\theoremstyle{remark}
\newtheorem{remark}[theorem]{Remark}
\newtheorem{example}[theorem]{Example}

\numberwithin{equation}{section}



\def\epsilon{\varepsilon}
\def\thetac{\theta}
\def\thetar{\vartheta}


\def\Magma{\textsc{Magma} }

\def\ie{i.e. }

\def\PHM{toggle model}
\def\T0{\Theta_{0 0}}
\def\T1{\Theta_{0 1}}

\def\P{\mathbb{P}}

\def\theta{\vartheta}

\def\C{\mathbb{C}}
\def\Z{\mathbb{Z}}



\DeclareMathOperator{\Pic}{Pic}

\DeclareMathOperator{\Sp}{Sp}

\DeclareMathOperator{\Spec}{Spec}

\DeclareMathOperator{\Sym}{Sym}


\def\A{\mathbb{A}}

\def\C{\mathbb{C}}

\def\F{\mathbb{F}}

\def\O{\mathcal{O}}
\def\P{\mathbb{P}\,}
\def\Q{\mathbb{Q}}

\def\T{\mathbb{T}}
\def\Z{\mathbb{Z}}




\usepackage{bm}





\def\Oc{\mathcal{O}}
\def\Cc{\mathcal{C}}

\def\Lc{\mathcal{L}}



\newcommand{\car}[2]{%
\left[{\substack{#1\\#2}}\right]}


\hypersetup{
  pdfauthor   = {Lercier, Lorenzo Garc\'ia, Ritzenthaler},
  pdftitle    = {Reduction of non hyperelliptic genus 3 curves},
  pdfsubject  = {},
  pdfkeywords = {},
  backref=true, pagebackref=true, hyperindex=true, colorlinks=true,
  breaklinks=true, urlcolor=blue, linkcolor=blue, citecolor=blue,
  bookmarks=true, bookmarksopen=true}


\begin{document}

\title{Stable models of plane quartics with hyperelliptic reduction}
\date{\today}

\begin{abstract}
  Let $C/K : F=0$ be a smooth plane quartic over a complete discrete valuation
  field $K$. In \cite{LLLR} the authors give various characterizations of  the reduction (i.e. non-hyperelliptic genus 3 curve,
  hyperelliptic genus 3 curve or bad)  of the stable model
  of $C$: in terms of the existence of a special plane quartic model and in terms of the valuations of
  the Dixmier-Ohno invariants of $C$. The last one gives in particular an easy
  computable criterion for the reduction type. However, it does not produce a stable model, even in the case of good reduction.
   In this paper we give an algorithm to obtain (an approximation of) the stable model when the reduction of the latter is hyperelliptic and the characteristic of the residue field is not $2$. This is based on a new criterion giving the reduction type in terms of the valuations of the theta constants of $C$.  Some examples of the computation of these models are given. 
\end{abstract}


\author[Lercier]{Reynald Lercier}
\address{%
  Reynald Lercier,
  DGA \& Univ Rennes, %
  CNRS, IRMAR - UMR 6625, F-35000
 Rennes, %
  France. %
}
\email{reynald.lercier@m4x.org}

\author[Lorenzo]{Elisa Lorenzo Garc\'ia}
\address{%
	Elisa Lorenzo Garc\'ia
  Univ Rennes, CNRS, IRMAR - UMR 6625, F-35000
 Rennes, %
  France. %
}
\email{elisa.lorenzogarcia@univ-rennes1.fr}

\author[Ritzenthaler]{Christophe Ritzenthaler}
\address{%
	Christophe Ritzenthaler,
  Univ Rennes, CNRS, IRMAR - UMR 6625, F-35000
 Rennes, %
  France. %
}
\email{christophe.ritzenthaler@univ-rennes1.fr}





\maketitle



\section{Introduction and main result}

Let $K$ be a complete discrete valuation field with valuation $v$ and
valuation ring $\O$ containing a maximal ideal generated by $\pi$. Let
$k=\Oc/\langle \pi \rangle$ be the residual field. In
the following, the expression ``after a possible extension of $K$'' means that
we are allowed to take a finite
extension of $K$ and still call  $K, \O, v, \pi$ the
corresponding notions. When $F$ is an integral polynomial, \ie with
coefficients in $\O$, we denote $\bar{F}$ its
reduction modulo $\pi$.  

Given a genus 3 non-hyperelliptic curve $C/K$, in \cite{LLLR}, the authors  answer the following question: after a possible extension of $K$, what is the reduction type of the stable model $\Cc/\O$ of $C$? By this, we mean to distinguish between 
\begin{itemize}
\item $\Cc_k$ is still a non-hyperelliptic curve of genus 3. We say that $C$ has \emph{potentially good quartic reduction};
\item $\Cc_k$ is a hyperelliptic curve of genus 3. We say that $C$ has
  \emph{potentially good hyperelliptic reduction};
\item $\Cc_k$ is not a curve of genus 3. We say that  $C$ has \emph{geometrically bad reduction}.
\end{itemize}
In the first case, the special fiber is again a smooth plane
quartic over $k$,
whereas in the second case it is isomorphic over $\bar{k}$ to $y^2=f(x,z)$ where $f$
is a binary octic with no multiple roots. 

\begin{example}
	The discriminant of the Klein quartic given by $C:\,x_1^3x_2+x_2^3x_3+x_3^3x_1=0$ is equal to $7^7$, hence the model $x_1^3x_2+x_2^3x_3+x_3^3x_1=0$ over $\Z$	
	 has good reduction everywhere except at $\pi=7$. To study the reduction type of the stable model at $7$, notice that $C$ is $\bar{\Q}$-isomorphic to the curve \cite[pp.56]{ElkiesKlein}
	$$
	(x_1^2+x_2^2+x_3^2)^2+\sqrt{-7} \alpha^2 \cdot (x_1^2x_2^2+x_2^2x_3^2+x_3^2x_1^2)=0
	$$  
	with $\alpha=\frac{-1+\sqrt{-7}}{2}$. Consider now the scheme
	$$
	\mathcal{C}:\begin{cases}
	y^2=-(x_1^2x_2^2+x_2^2x_3^2+x_3^2x_1^2),\\
	\sqrt[4]{-7} \alpha \cdot y=x_1^2+x_2^2+x_3^2
	\end{cases}
	$$
	in the weighted projective space $\P^{(1,1,1,2)}$ over the ring of integers $\O$ of $K=\mathbb{Q}_7(\sqrt[4]{-7})$.  Its generic fiber is isomorphic over $K$ to $C$ whereas
	 $\mathcal{C}_k$ is isomorphic over $\bar{\F}_7$ to 
	$$\begin{cases}
	y^2=-(x_1^2x_2^2+x_2^2x_3^2+x_3^2x_1^2),\\
	0=x_1^2+x_2^2+x_3^2
	\end{cases} 
	$$
	which turns out to be the hyperelliptic curve $y^2=x^8+14x^4z^4+z^8$.
\end{example}

The shape of the stable model in  the previous example is symptomatic of the situation and motivates the following definition.

\begin{definition}\label{def:HGRM}(Def. 1.3, \cite{LLLR})
	Let $C/K$ be a smooth plane quartic. We say that $C$ admits a \emph{\PHM} if
	there exist an integer $s>0$, a primitive (\ie the gcd of its coefficients is 1) quartic form
	$G \in \O[x_1,x_2,x_3]$ and a primitive quadric $Q \in \O[x_1,x_2,x_3]$ with
	$\bar{Q}$ irreducible such that $Q^2+\pi^s G=0$ is $K$-isomorphic to $C$. If
	moreover $\bar{Q}=0$ intersects $\bar{G}=0$ transversely
	in $8$ distinct $\overline{k}$-points, we say that $C$ admits a
	\emph{good \PHM}.
\end{definition}

\begin{proposition}\label{2->1}(Prop. 1.2, \cite{LLLR}) Suppose $p \ne 2$.  Let $C/K$ be a smooth plane quartic having a good toggle model $Q^2+\pi^{2s} G=0$.
	Let us denote  $\Cc/\O$ the subscheme of the weighted projective space $\P^{(1,1,1,2)}$ defined by 
	$$\begin{cases}
	y^2 + G = 0, & \\
	\pi^s y - Q =0. &
	\end{cases}$$
	Then the generic fiber is isomorphic to the plane smooth quartic $C/K $  which has good hyperelliptic reduction. The special fiber  of $\Cc$ is isomorphic to the  double cover of  $\bar{Q}=0$  ramified over the 8 distinct intersection $\overline{k}$-points  of  $\bar{Q}=0$ with $\bar{G}=0$. 
\end{proposition}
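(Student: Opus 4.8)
The plan is to verify, essentially by a direct substitution, that the two displayed equations cut out a flat projective $\O$-scheme $\Cc$ with the asserted generic and special fibres; the one delicate point will be flatness. Begin with the generic fibre. Over $K$ the element $\pi^s$ is a unit, so on $\Cc_K$ the relation $\pi^s y - Q = 0$ forces $y = Q/\pi^s$, and it is precisely the evenness of the exponent $2s$ in the toggle model $Q^2 + \pi^{2s}G = 0$ that makes this legitimate, the degree of $Q$ matching the weight $2$ of $y$. I would then check that the projection $(x_1\!:\!x_2\!:\!x_3\!:\!y)\mapsto(x_1\!:\!x_2\!:\!x_3)$ restricts to an isomorphism $\Cc_K \xrightarrow{\ \sim\ } \{Q^2 + \pi^{2s}G = 0\}\subset\P^2_K$: it is a morphism of $K$-schemes landing in that quartic (multiply $y^2 + G = 0$ by $\pi^{2s}$ and use $(\pi^s y)^2 = Q^2$), with inverse $(x_1\!:\!x_2\!:\!x_3)\mapsto(x_1\!:\!x_2\!:\!x_3\!:\!Q/\pi^s)$, which is well defined on $\P^{(1,1,1,2)}$ since $Q$ is homogeneous of degree $2$. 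As that plane quartic is $K$-isomorphic to $C$ by hypothesis, $\Cc_K \cong C$.

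Now the special fibre. Since $s>0$, reducing the two equations modulo $\pi$ gives $\bar Q = 0$ and $y^2 + \bar G = 0$, with $\bar G\neq 0$ because $G$ is primitive. As $\bar Q$ is irreducible, the plane conic $\{\bar Q = 0\}\subset\P^2_k$ is smooth, hence isomorphic to $\P^1_{\bar k}$ after base change to $\bar k$; I would also note that $\Cc_k$ avoids the unique singular point $[0\!:\!0\!:\!0\!:\!1]$ of $\P^{(1,1,1,2)}$, where $y^2 + \bar G$ takes the value $1$, so $\Cc_k$ lives in the smooth locus of the ambient space. Restricting $-\bar G$ to the conic yields a nonzero section of a degree-$8$ line bundle on $\{\bar Q = 0\}\cong\P^1_{\bar k}$ whose zero scheme is exactly the $8$ distinct, transverse intersection points of $\bar Q = 0$ with $\bar G = 0$, each of multiplicity $1$ by transversality. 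As $y$ has weight $2$, the equation $y^2 = -\bar G$ then exhibits $\Cc_k$ as the double cover of $\{\bar Q = 0\}$ branched precisely at those $8$ points with simple ramification; such a cover of a genus-$0$ curve is smooth, connected and of genus $3$ by Riemann--Hurwitz, and over $\bar k$ it is $y^2 = f(x,z)$ for a binary octic $f$ with $8$ distinct roots, matching the description in the Introduction.

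Finally I would show that $\Cc\to\Spec\O$ is proper and flat, which is what makes $\Cc$ an honest model so that the two computations above genuinely identify a reduction of $C$. Properness is clear, $\Cc$ being closed in the proper $\O$-scheme $\P^{(1,1,1,2)}_\O$. For flatness I would pass to the graded ring $\O[x_1,x_2,x_3,y]$ and introduce the two equations one at a time, checking that each reduces modulo $\pi$ to a nonzerodivisor on the previous special fibre: $\overline{\pi^s y - Q} = -\bar Q$ is a nonzerodivisor in $k[x_1,x_2,x_3,y]$ because $Q$ is primitive, and $\overline{y^2+G} = y^2 + \bar G$ is a nonzerodivisor in $k[x_1,x_2,x_3,y]/(\bar Q)$ because that ring is a domain (irreducibility of $\bar Q$) in which $y^2 + \bar G$ is nonzero. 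Since flatness over a discrete valuation ring is torsion-freeness, $\O[x_1,x_2,x_3,y]/(\pi^s y - Q,\, y^2 + G)$ is then $\O$-flat, and so is $\Cc$. Putting this together, $\Cc$ is a flat proper $\O$-model of $C$ with smooth special fibre, so $C$ has good reduction, and by the special-fibre computation that reduction is the stated hyperelliptic curve of genus $3$. I expect this last step to be the main obstacle: the point is that mere integrality of $Q$ and $G$ would not suffice; it is their primitivity, together with the irreducibility of $\bar Q$, that keeps the two equations a regular sequence modulo $\pi$ and hence $\Cc$ flat with the correct, smooth special fibre. Everything else is bookkeeping in the weighted projective space.
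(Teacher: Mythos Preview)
The paper does not supply its own proof of this proposition; it is quoted from \cite{LLLR} and used as a black box. Your argument is correct and is essentially the expected one: identify the generic fibre by eliminating $y$ via $y=Q/\pi^{s}$, identify the special fibre as the double cover $y^{2}=-\bar G$ of the smooth conic $\bar Q=0$ branched at the eight transverse intersection points, and secure flatness over $\O$ by checking that $(\pi^{s}y-Q,\,y^{2}+G)$ remains a regular sequence after reduction modulo $\pi$. The flatness step, which you rightly single out as the only non-formal point, goes through exactly as you say: $\bar Q$ is irreducible, so $k[x_1,x_2,x_3,y]/(\bar Q)$ is a domain, and $y^{2}+\bar G$ is visibly nonzero there since $y$ does not occur in $\bar Q$.
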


\begin{remark}
	In \cite[Thm. 2.10]{LLLR} there is an equivalent result for the characteristic $2$ case.
\end{remark}

\begin{theorem}\label{th:goodmodel}(\cite[Thm. 1.4]{LLLR}) 
	Let  $C/K$ be a plane smooth quartic. 
	Then $C$ has good hyperelliptic reduction if and only if 
	$C$ has a good {\PHM} over $K$. 
\end{theorem}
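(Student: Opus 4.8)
The strategy is to prove the two implications separately. The ``if'' direction is essentially Proposition~\ref{2->1}: given a good {\PHM} $Q^2+\pi^sG=0$ of $C$, if $s$ is odd pass first to the ramified quadratic extension $K(\pi^{1/2})$ — this leaves the residue field unchanged, hence preserves primitivity of $Q$ and $G$, irreducibility of $\bar Q$ and transversality of $\bar Q=0$ with $\bar G=0$, while making the exponent even — and then apply Proposition~\ref{2->1}. It yields an $\mathcal{O}$-model of $C$ whose special fibre is a smooth hyperelliptic curve of genus $3$; being proper, flat and fibrewise smooth it is smooth over $\mathcal{O}$, hence is the stable model, so $C$ has good hyperelliptic reduction.

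For the ``only if'' direction, after a possible extension of $K$ let $\mathcal{C}/\mathcal{O}$ be the stable model, so $\mathcal{C}$ is smooth over $\mathcal{O}$ with $\mathcal{C}_k$ a smooth hyperelliptic curve of genus $3$. The first step is the relative canonical map: the cohomology of $\omega_{\mathcal{C}/\mathcal{O}}$ has constant ranks along the fibres, so $\pi_*\omega_{\mathcal{C}/\mathcal{O}}$ is locally free of rank $3$ and the canonical system defines $\phi\colon\mathcal{C}\to\mathbb{P}^2_{\mathcal{O}}$ which on the generic fibre is the plane quartic embedding $C=V(F)$, with $F\in\mathcal{O}[x_1,x_2,x_3]$ primitive (the model I henceforth fix), and on the special fibre is the hyperelliptic quotient, i.e.\ the $2{:}1$ map onto a smooth plane conic $V(\bar Q)\subset\mathbb{P}^2_k$. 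Since $\mathcal{C}$ is normal and $\phi\colon\mathcal{C}\to\mathcal{D}:=V(F)$ is finite and birational, $\mathcal{C}$ is the normalisation $\widetilde{\mathcal{D}}$; and since $\mathcal{C}_k$ surjects onto $\mathcal{D}_k=V(\bar F)$ with set-theoretic image the irreducible conic $V(\bar Q)$, necessarily $\bar F=\bar Q^2$ up to a unit. Rescaling $F$ and lifting $\bar Q$ to a primitive quadric $Q$, one obtains $F=Q^2+\pi^sG$ with $s=v(F-Q^2)\ge 1$ (equality $F=Q^2$ being impossible as $C$ is smooth) and $G$ primitive — that is, a {\PHM} of $C$, with $\bar Q$ automatically irreducible.

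It remains to arrange this {\PHM} to be good. Whenever $\bar Q\mid\bar G$, writing $\bar G=\bar Q\bar H$ the substitution $Q\mapsto Q+\tfrac12\pi^sH$ strictly raises $v(F-Q^2)$; this cannot go on forever (else the $Q$'s would converge to a square root of $F$, impossible for $C$ smooth), so after finitely many adjustments we may assume $\bar Q\nmid\bar G$. Next, passing to $K(\pi^{1/2})$ if needed so that $s=2s'$, I would show that the model $\{y^2+G=0,\ \pi^{s'}y=Q\}$ of Proposition~\ref{2->1} is \emph{already} normal and therefore equals $\mathcal{C}=\widetilde{\mathcal{D}}$: locally it is a Cohen--Macaulay complete intersection (using that $-G$ is not a square, as $C$ is irreducible), it is finite and birational over $\mathcal{D}$, and — since $V(\bar Q)$ is smooth and $p\neq2$ — it is regular in codimension $1$, the hypothesis $\bar Q\nmid\bar G$ ensuring that its special fibre is generically reduced and that its only possible singularities lie over the points where $V(\bar Q)$ meets $V(\bar G)$ non-transversally. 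Hence $\mathcal{C}_k=\{y^2+\bar G=0\}\cap V(\bar Q)$ is the double cover of $V(\bar Q)\cong\mathbb{P}^1$ with branch divisor $V(\bar Q)\cap V(\bar G)$, of degree $8$; smoothness of $\mathcal{C}_k$ forces this divisor to be reduced, i.e.\ the intersection to be transverse in $8$ distinct $\bar k$-points, which is precisely the definition of a good {\PHM}.

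The main obstacle is concentrated in this last part: identifying the reduction of the relative canonical map with the degeneration to a double conic, and — most delicately — proving that the naive double-cover model $\{y^2+G=0,\ \pi^{s'}y=Q\}$ is normal once $\bar Q\nmid\bar G$, so that it really computes the stable special fibre. The surrounding bookkeeping about the various ramified and residual base extensions is routine but should be made explicit, since both ``good hyperelliptic reduction'' and ``good {\PHM}'' are genuinely potential notions and the conditions of Definition~\ref{def:HGRM} involve only the residue field.
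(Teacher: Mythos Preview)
The paper does not actually prove Theorem~\ref{th:goodmodel}: it is quoted from \cite[Thm.~1.4]{LLLR}, and the ``if'' direction is restated separately as Proposition~\ref{2->1} (also a citation). Your treatment of that direction --- passing to $K(\pi^{1/2})$ when $s$ is odd and invoking Proposition~\ref{2->1} --- is correct.

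For the ``only if'' direction your geometric argument is sound and is, in outline, the natural proof: the relative canonical map forces $\bar F=\bar Q^{\,2}$; the successive corrections $Q\mapsto Q+\tfrac12\pi^{s}H$ terminate because $\mathcal O$ is complete and $F$ is not a square; and once $\bar Q\nmid\bar G$ the double-cover model is Cohen--Macaulay (complete intersection) with generically reduced special fibre (since $p\neq 2$), hence $R_1$, hence normal, hence equal to $\mathcal C$ --- after which smoothness of $\mathcal C_k$ forces the branch divisor to be reduced. The point you isolate as delicate is exactly the right one, and the ingredients you list suffice; note in particular that the possible singularities of the special fibre at non-transverse intersection points are closed points, hence already of codimension~$2$ in the total space, and so play no role in the $R_1$ verification.

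It may interest you that this paper, while not proving Theorem~\ref{th:goodmodel} directly, effectively re-derives it along a very different route in the course of proving Theorem~\ref{Thm:charThetas}: from good hyperelliptic reduction one first deduces that exactly one integral theta constant has positive valuation, and then one \emph{writes down} an explicit good \PHM{} as Weber's Riemann model~\eqref{eq:Riemann}, verifying goodness coefficient by coefficient via classical theta identities (Lemmas~\ref{nonnegv} and~\ref{lem:8inter}). Your approach is conceptually cleaner and self-contained; the paper's detour is computational but produces an explicit model with coefficients expressed in the theta constants, which is exactly what the authors need for their algorithm.
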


Over $\C$, it is well-known that one can associate to the Jacobian of a genus $g$ curve $N=2^{g-1}(2^g+1)$ values, which are called theta constants. We will need a fancy version of these values over $K$ (see Section~\ref{sec:theta}) but the intuition remains the same. Now, over a DVR, we can multiply the theta constants by a common factor such that they become integral and that the minimum of their valuations is $0$. These new values $(\theta_i)_{i=0,\ldots,N-1}$ are uniquely defined up to a unit in $\O$ and we call them the \emph{integral theta constants} of the curve. In this paper we prove the following result:
\begin{theorem}\label{Thm:charThetas}
Let $C$ be a smooth plane quartic over $K$ with residue field of characteristic different from $2$. The curve $C$ has potentially good hyperelliptic reduction if and only if there is a unique integral theta constant of $C$ with positive valuation.
\end{theorem}

The direct implication of Theorem \ref{Thm:charThetas} is
relatively straightforward, for the converse we  construct an explicit
\PHM{} given by a Riemann model of the quartic and we show that it is good using the relations between the theta constants
given in \cite{weber}. The construction of the \PHM{} is completely explicit in the proof and allows us to compute a stable model in this case. We implemented this construction in \Magma~\cite{Magma} and we performed numerical experiments of some example curves.

\begin{remark}
A similar procedure could actually be realized over a number field (and would be easier to implement) when the $2$-structure of the curve is not defined over a too large extension. Unfortunately, in general, this structure is defined over an extension of degree $\# \Sp_6(\F_2)=1451520$ (\cite{harris-enu}).
\end{remark}

\begin{remark}
In \cite{LLLR}, the distinction between potentially good hyperelliptic reduction and geometrically bad reduction in terms of the Dixmier-Ohno is possible only when  $p> 7$. Our present algorithm present the advantage to allow the characteristics $p=3,5$ and $7$.
\end{remark}

\subsection*{Acknowledgement} We thank Tristan Vaccon for helpful discussions.

\section{Link with theta constants}  \label{sec:theta}

Let $R$ be a ring and $S=\Spec R$. Let $X/R$  be an abelian scheme and $\Lc$ be a relatively ample line bundle on
$X$ such that $(-1_X)^* \Lc \simeq \Lc$.  Fix an isomorphism $\epsilon: 0^* \Lc \simeq \Oc_S$ where
$0 : S \to X$ is the zero section. To any $s \in \Gamma(X,\Lc)$, Mumford associates (see \cite[Appendix
I]{mumford-tata3}) a morphism $\thetar_s : X[2] \to \A^1_S$. Following
\cite[Prop.~5.11]{mumford-tata3} (see also loc. cit. Definition.~5.8), in the special case where $S=\Spec \C$ and
$\Lc$ is the basic line bundle  on $X_{\tau}=\C^g/(\Z^g+ \tau \Z^g)$ (see
loc. cit. p.~36), then $s$ is uniquely defined up to a multiplicative constant  and there is a unique choice of $\epsilon$ such that
\begin{equation} \label{eq:theta}
\thetar_s(x) =\thetar^{\alpha}  \car{x_1}{x_2}(\tau) = e^{-i \pi x_1.x_2} \cdot \thetar \car{x_1}{x_2}(\tau)
\end{equation} 
for any $x \in X[n] \simeq (\frac{1}{n} \Z/\Z)^{2g} \ni (x_1,x_2)$ (after a
specific isomorphism of the $n$-torsion) where $\thetar \car{x_1}{x_2}(\tau)$
is the value at $0$ of the classical theta function with characteristic
$\car{x_1}{x_2}$ \cite[p.192]{mumford-tata1}.

Let $\Cc/R$ be a proper and flat scheme whose fibers are smooth curves of
genus $g>0$. Then $Pic^0(\Cc)/R$ is an abelian scheme and the previous theory
can be applied. Let $D_0$ be a theta characteristic divisor on $\Cc$.  Recall
that a theta characteristic divisor $D$ is a divisor such that
$2 D \sim \kappa$, where $\kappa$ is the canonical divisor (relative to
$S$). The divisor $ \Sym^{g-1} \Cc - D_0 \subset \Pic^0(\Cc)$ is symmetric and defines a symmetric line bundle $\Lc$ on
$\Pic^0(\Cc)$ which induces the canonical principal polarization on
$\Pic^0(\Cc)$. In particular there exists a unique (up to a multiplicative
constant) $s \in \Gamma(\Pic^0(\Cc),\Lc)$.

Let $R=\mathbbm{k}$ where $\mathbbm{k}$ is a field of characteristic
different from $2$ and for a divisor $D$ on $\Cc/\mathbbm{k}$, let us denote $L(D)$ its
Riemann-Roch space. Mumford showed in \cite{mumford-charac} that the
function $e_* : D \mapsto \dim L(D) \pmod{2}$ is a quadratic function on the
set of theta characteristic divisors $D$.  We say that $D$ is even (resp. odd) if $e_*(D)=0$
(resp. $1$). Since the map $D \mapsto D-D_0$ is a bijection between the set of
theta characteristic divisors and the set of $2$-torsion points, one can say
that the corresponding $\thetar_D= \thetar_s(D-D_0)$ is even (resp. odd). The
study of Arf invariants of quadratic forms shows that there are $2^{g-1}(2+1)$
even (resp. $2^{g-1}(2-1)$ odd) theta characteristic divisors and the
corresponding $\thetar_D$ are called \emph{theta constants} -- or
Thetanullwerte--.  As stated in loc. cit.
p.182 and refined in \cite{kempf}, the classical Riemann singularity theorem
over $\C$ extends to the present setting and in particular we get that
$\thetar_D=0$ if and only if $\dim L(D)>0$ (the odd $\thetar_D$ are therefore always equal to
zero).

In the particular case where  $\Cc/\mathbbm{k}$ is a curve of genus $3$, Clifford's theorem
\cite[IV.Th.5.4]{Hart} shows that $\dim L(D) \leq 2$, the equality being
possible only when $C$ is hyperelliptic and for a unique even theta characteristic
divisor. Hence one recovers the classical result that $C$ has $36$
(resp. 35) non-zero theta constants when $C$ is non-hyperelliptic
(resp. hyperelliptic).

Coming back to the case where $R=\O$ is a DVR and $C/K$ a non-hyperelliptic genus $3$
we denote (after a possible extension of $K$) $(\thetac_0,\ldots,\thetac_{35}) \in
\O^{36}$ its integral theta constants.  

\begin{proof}(of Theorem \ref{Thm:charThetas})
  Let us assume that $C$ has potentially good hyperelliptic reduction and let
  $\Cc/\O$ be a a smooth model of $C$ such that $\Cc_k$ is
  hyperelliptic. Since $\Pic^0(\Cc)/R$ is an abelian scheme, we can use the
  algebraic thetas $\thetar_s(x) \in \Oc$ defined above. Mumford \cite[Appendix
I]{mumford-tata3} has proved
  that these values are equal up to a constant $\lambda \in K^*$ to the
  $\thetar_D$ on $C$, therefore there are exactly $36$ of them which are
  non-zero. Moreover they shall also reduce to the theta constants
  on $\Cc_k$ which is hyperelliptic of genus $3$ and
  therefore exactly one of the $36$ non-zero ones has positive valuation. 
   Therefore after ordering them, the non-zero
  $\thetar_s(x)$ coincides  up to a unit in $\O$ with
  $(\thetac_0,\ldots,\thetac_{35})$. We therefore get that exactly one of the
  latter has positive valuation.

In the opposite direction, we assume now that among the $(\thetac_i)_{i=0,\ldots,35}$, there is a unique one with positive valuation. We are going to use the beautiful work of Weber \cite{weber} to obtain a good \PHM. Although this work is of course written  over $\C$ in the language of classical theta functions, the link \eqref{eq:theta} between classical theta constants and algebraic theta constants shows that all the algebraic homogeneous relations can be used with the latter ones. 

Weber (\cite[p.108]{weber}, see also \cite{Fiorentino}) introduces $9$ values
$(a_{ij})_{i,j=1,2,3}$ \footnote{In Weber's notation  $a_{1i}=a_i$, $a_{2i}=a'_i$ and
  $a_{3i}=a''_i$.} given as homogeneous quotient of products of theta
constants. In Section~\ref{sec:bitangents}, we will see the relations of these constants with the geometry of the curve (through its bitangents) but we will not need them right now.
We will assume that the theta constant denoted
$\thetar \car{100}{001}$ in Weber's book is the one which corresponds to the
one with positive valuation (this choice can be made without loss of
generality after a choice of a right symplectic basis of the $2$-torsion for
the Weil pairing). The expressions of the $a_{ij}$ in terms of the theta
constants imply that only $a_{11}$ and $a_{31}$ have positive valuation. Weber
gives then an explicit construction of a quartic form
$F \in K[x_1,x_2,x_3]$ such that $F=0$ is isomorphic to $C$ from the $a_{ij}$. This form is
  \begin{equation}\label{eq:Riemann}
  F=  (x_1 u_1 + x_2 u_2-x_3 u_3)^2- 4 x_1 u_1 x_2 u_2
  \end{equation}
where the $u_i$ are linear forms, solutions of the linear system
	\begin{equation}\label{eq:u}
	\begin{bmatrix}
	1 & 1 & 1 \\
	\frac{1}{a_{11}} & \frac{1}{a_{12}} &\frac{1}{ a_{13}} \\
	\frac{1}{a_{21}} & \frac{1}{a_{22}} & \frac{1}{a_{23}}
	\end{bmatrix}
	\cdot
	\begin{bmatrix}
	u_1 \\ u_2 \\ u_3
	\end{bmatrix} =
	\begin{bmatrix}
	1 & 1 & 1 \\
	a_{11} & a_{12} & a_{13} \\
	a_{21} & a_{22} & a_{23}
	\end{bmatrix}
	\cdot
	\begin{bmatrix}
	x_1 \\ x_2 \\ x_3
	\end{bmatrix}.
	\end{equation}
Let us denote $Q=x_1 u_1 + x_2 u_2-x_3 u_3$ and $G=- 4 x_1 u_1 x_2 u_2$. A series of computations (see Section~\ref{Sec:Weber}) using relations between the theta constants shows that
\begin{enumerate}
\item the $u_i$ are defined over $\Oc$ (Lemma~\ref{nonnegv});
\item $G=\pi^s G_0$ for $s >0$ an integer, $G_0$ integral and primitive and $\bar{G_0}=0$ intersects $\bar{Q}=0$ in 8 distinct points (Lemma~\ref{lem:8inter});
\item $\bar{Q}$ is a non-degenerate quadric (end of Lemma~\ref{lem:8inter}).
\end{enumerate}
We therefore get that $F$ is a good \PHM{} of $C$ and so from Proposition~\ref{2->1} we get that $C$ has potentially good hyperelliptic reduction.
\end{proof}




\section{A Riemann model providing a good \PHM}\label{Sec:Weber}

We resume with the notation of previous section. In the sequel, we
 denote $a_{11} = \pi^{v_0} a_{11}'$ and
$a_{31} = \pi^{v_0} a_{31}'$ with $v_0 > 0$ an integer and $a_{11}'$ and
 $a_{31}'$ units. If $\ell=a x_1+b x_2 + c x_3$ is a
linear form over $K$, we denote $v(\ell)=\min(v(a),v(b),v(c))$. In particular
for the linear form $u_i$ defined in \eqref{eq:u}, we let $v_i=v(u_i)$ for $i=1,2,3$.
	
	\begin{lemma}\label{nonnegv} The valuations $v_i$ are non-negative.
	\end{lemma}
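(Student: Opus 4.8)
The plan is to read off the valuations of the $u_i$ directly from Cramer's rule applied to the linear system \eqref{eq:u}, and to show that the numerator determinants pick up at least as much $\pi$-divisibility as the denominator determinant. Write the system as $M\cdot (u_1,u_2,u_3)^t = N\cdot(x_1,x_2,x_3)^t$, where $M$ is the matrix with rows $(1,1,1)$, $(1/a_{11},1/a_{12},1/a_{13})$, $(1/a_{21},1/a_{22},1/a_{23})$ and $N$ is the matrix with rows $(1,1,1)$, $(a_{11},a_{12},a_{13})$, $(a_{21},a_{22},a_{23})$. By the hypothesis recalled at the start of the section, only $a_{11}$ and $a_{31}$ have positive valuation $v_0>0$, while $a_{12},a_{13},a_{21},a_{22},a_{23}$ are all units; in particular the entries of $N$ are all integral, so $\det M \cdot u_i$ is an $\Oc$-linear combination of $x_1,x_2,x_3$ and hence $v(u_i) \ge -v(\det M) + (\text{valuation of the }i\text{-th Cramer numerator})$. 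So the lemma reduces to the claim that for each $i$ the $i$-th numerator determinant (obtained by replacing the $i$-th column of $M$ by the corresponding column of $N$) has valuation at least $v(\det M)$.

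The next step is to compute $v(\det M)$ and $v(\det N)$ using multilinearity in the rows. Clearing denominators in the second and third rows of $M$, one gets $\det M = \frac{1}{a_{11}a_{12}a_{13}} \cdot (\text{polynomial in the }a_{ij})$; since $a_{12},a_{13}$ are units, $v(\det M) = -v_0 + v(\det \widetilde M)$ where $\widetilde M$ has integral entries $a_{12}a_{13}, a_{11}a_{13}, a_{11}a_{12}$ in the second row, etc. The Vandermonde-like structure — $M$ and $N$ together form a confluent system coming from Weber's parametrization — suggests computing these $3\times 3$ determinants explicitly as products of differences $a_{1j}-a_{1k}$ and similar, and then comparing $\pi$-adic valuations term by term. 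The key combinatorial point I expect: because $a_{11}$ and $a_{31}=a_{11}''$ both vanish to order $v_0$ (a feature forced by the theta-constant relations and the fact that a \emph{single} integral theta constant degenerates), each numerator determinant acquires at least the same factor of $\pi^{v_0}$ that appears in $\det M$ after clearing the $1/a_{11}$ denominators — i.e.\ the pole at $a_{11}=0$ of the Cramer quotient is cancelled.

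The main obstacle will be the bookkeeping of exactly which monomials in the $a_{ij}$ survive and with what valuation: one has a $3\times 3$ determinant that is a sum of six terms, several of which individually have valuation strictly less than $v(\det M)$ before cancellation, so a naive triangle-inequality estimate is not enough — one must exhibit the cancellation. I would handle this by factoring each relevant determinant using the algebraic identities among the $a_{ij}$ recorded in Weber \cite{weber} (and revisited in Section~\ref{sec:bitangents} of this paper), rewriting everything in terms of the $a_{11}', a_{31}'$ (units) and the genuinely unit quantities, so that the $\pi^{v_0}$ factors become manifest. An alternative, cleaner route, if the determinants are unwieldy, is to argue more structurally: multiply through by $a_{11}$ to make the first column of $M$ integral, observe that then $\det(a_{11}M)$ and each $\det$ with the $i$-th column replaced are all integral, and check that the extra factor of $a_{11}$ introduced in the numerator matches the one in $\det M$ — reducing the whole lemma to the single statement $v(\det(a_{11}M)) \le v(a_{11}\cdot(\text{Cramer numerators}))$, which is then a finite check using only $v_0>0$ and integrality of all remaining entries.
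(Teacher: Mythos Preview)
Your plan is essentially the paper's approach: solve for the $u_i$ via $M^{-1}N$ (equivalently, Cramer's rule), compute $v(\det M)$, and bound the cofactors/numerators. The paper computes $\det M$ explicitly, uses a Weber identity to see that $a_{22}-a_{23}$ is a unit and hence $v(\det M)=-v_0$ exactly, and then notes that every entry of $\operatorname{adj}(M)$ has valuation $\ge -v_0$; since $N$ is integral, $M^{-1}N$ is integral.

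Two points where your diagnosis is off. First, you have the location of the real difficulty backwards. The Cramer numerators (or equivalently the adjoint entries of $M$) satisfy $v\ge -v_0$ by the bare triangle inequality: the only non-integral entry of $M$ is $1/a_{11}$, and it occurs at most once in each monomial of any $2\times 2$ or $3\times 3$ minor, so no cancellation needs to be exhibited there. The non-trivial step is the \emph{opposite} inequality for $\det M$: you must show $v(\det M)\le -v_0$ (hence $=-v_0$), i.e.\ that no \emph{extra} cancellation occurs in the numerator of $\det M$. That is precisely where the Weber identity for $a_{22}-a_{23}$ enters.

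Second, $a_{31}$ plays no role here: the system \eqref{eq:u} only involves $a_{1j}$ and $a_{2j}$, so the positivity of $v(a_{31})$ is irrelevant to this lemma. Also, these $3\times 3$ determinants are not Vandermonde-like and do not factor as products of differences; expecting such a factorization will lead you astray. Once you drop that expectation, the computation is short: write $\det M$ over the common denominator $a_{11}a_{12}a_{13}a_{21}a_{22}a_{23}$, split the numerator into the part divisible by $a_{11}$ and the remainder $-a_{12}a_{13}a_{21}(a_{22}-a_{23})$, and invoke Weber \cite[16.14]{weber} to see the remainder is a unit.
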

	
	\begin{proof} We already know that $v(a_{ij}) \geq 0$ and $v(x_i)=0$,
          so it is enough  to check that the valuations of
          all the entries of the inverse of the matrix
		$$
		M=\begin{bmatrix}
		1 & 1 & 1 \\
		\frac{1}{a_{11}} & \frac{1}{a_{12}} &\frac{1}{ a_{13}} \\
		\frac{1}{a_{21}} & \frac{1}{a_{22}} & \frac{1}{a_{23}}
		\end{bmatrix}
		$$
		are also non-negative. We compute the inverse of $M$ by computing the adjoint matrix and dividing by the determinant of $M$.
		
		The determinant of $M$ is equal to 
		$$
		\frac{-a_{11}a_{12}a_{21}a_{23} + a_{11}a_{12}a_{22}a_{23} + a_{11}a_{13}a_{21}a_{22} - a_{11}a_{13}a_{22}a_{23} -
			a_{12}a_{13}a_{21}a_{22} + a_{12}a_{13}a_{21}a_{23}}{a_{11}a_{12}a_{13}a_{21}a_{22}a_{23}}=
		$$
		$$
		=\frac{a_{11}(-a_{12}a_{21}a_{23} + a_{12}a_{22}a_{23} + a_{13}a_{21}a_{22} - a_{13}a_{22}a_{23}) -
			a_{12}a_{13}a_{21}(a_{22} -a_{23})}{a_{11}a_{12}a_{13}a_{21}a_{22}a_{23}}
		$$
		where the factor $a_{22} -a_{23}$ equals
		$$ i\frac{\thetar\car{000}{100}\thetar\car{110}{111}\thetar\car{001}{000}\thetar\car{111}{011}}{\thetar\car{101}{111}\thetar\car{011}{100}\thetar\car{111}{101}\thetar\car{001}{110}}$$ 
		by \cite[16.14, pp.110]{weber}. Hence, it has zero valuation.
		Therefore, $v(\operatorname{det}M)=-v_0$. On the other hand, the valuation of the entries of the adjoint matrix are greater or equal to $-v_0$. So, the result follows. 
	\end{proof}
	
        If we multiply both sides of eq.~\eqref{eq:u} by $a_{11}$ and we
        look at the valuation of the entries in the equation, we
        get
\begin{equation*}
\begin{bmatrix}
v_0 & v_0 & v_0 \\
0 & v_0 &v_0 \\
v_0 & v_0 & v_0
\end{bmatrix}
\,,\ 
\begin{bmatrix}
v_1 \\ v_2 \\ v_3
\end{bmatrix} \text{for the LHS, and}
\begin{bmatrix}
v_0 & v_0 & v_0 \\
2v_0 & v_0 & v_0 \\
v_0 & v_0 & v_0
\end{bmatrix}
\,,\,
\begin{bmatrix}
0 \\ 0 \\ 0
\end{bmatrix}\text{for the RHS,}
\end{equation*}
%
%
from which we easily read that $v_1\geq v_0$ since $v_2,\,v_3$ are non-negative by Lemma~\ref{nonnegv}. We can write $u_1=\pi^{v_1}u'_1$.
This proves that the Riemann model 
$$
F=\underbrace{(\pi^{v_1}x_1u'_1+x_2u_2-x_3u_3)^2}_{Q}-4\pi^{v_1}\underbrace{x_1u'_1x_2u_2}_{G}=0,
$$
is defined over $\O$. We are going to show that it is a good \PHM.

\begin{lemma} \label{lem:8inter}
The intersection of $\bar{Q}=0$ with $\bar{G}=0$ is transverse  and the quadratic form $\bar{Q}=\,x_2\bar{u}_2-x_3\bar{u}_3$ is non-degenerate.
\end{lemma}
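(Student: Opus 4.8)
The plan is to make the reductions $\bar u_2,\bar u_3$ explicit and then to reduce both assertions to the non‑vanishing modulo $\pi$ of a short list of polynomial expressions in the $a_{ij}$ — equivalently in the theta constants — each such non‑vanishing being read off from Weber's relations \cite{weber}. First I would reduce the linear system \eqref{eq:u} modulo $\pi$: since $v_1\ge v_0>0$ one has $\bar u_1=0$ (so indeed $\bar Q=x_2\bar u_2-x_3\bar u_3$), and the first and third rows of \eqref{eq:u} are integral (the $a_{2j}$ being units) and become
\[
\bar u_2+\bar u_3=x_1+x_2+x_3,\qquad
\tfrac{1}{\bar a_{22}}\,\bar u_2+\tfrac{1}{\bar a_{23}}\,\bar u_3=\bar a_{21}x_1+\bar a_{22}x_2+\bar a_{23}x_3.
\]
Because $v(a_{22}-a_{23})=0$ by \cite[16.14]{weber} (the relation already used in Lemma~\ref{nonnegv}), this $2\times2$ system has a unique solution, giving explicit linear forms $\bar u_2,\bar u_3\in k[x_1,x_2,x_3]$, neither identically zero (a one‑line check, again using $\bar a_{22}\ne\bar a_{23}$), so $v_2=v_3=0$. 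In particular $u_2$ is primitive, whence $G_0=-4\,x_1u_1'x_2u_2$ is primitive (Gauss's lemma, $p\ne2$) and $s=v_1>0$; and $u_1'=\pi^{-v_1}u_1$ is primitive by construction, so $\bar u_1'$ is an honest line, which we make explicit as well (from the second row of \eqref{eq:u}, or directly from $M^{-1}N$).

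For the non‑degeneracy of $\bar Q=x_2\bar u_2-x_3\bar u_3$, I would form its Gram matrix from the explicit $\bar u_2,\bar u_3$ and compute the determinant. It is a rational function of the $\bar a_{2j}$ whose numerator, after substituting Weber's expressions for the $a_{ij}$, is — up to units and powers of $2$ — a product of theta constants in which $\thetar\car{100}{001}$ does not occur, hence a unit. Thus the determinant is nonzero in $k$, the conic $\bar Q=0$ is smooth, and in particular $\bar Q$ is irreducible, the condition on $Q$ demanded by Definition~\ref{def:HGRM}.

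For the transversality, $\bar G=0$ is the union of the four lines $\ell_1\colon x_1=0$, $\ell_2\colon\bar u_1'=0$, $\ell_3\colon x_2=0$, $\ell_4\colon\bar u_2=0$. It suffices to check that these four lines are pairwise distinct, that none of the six points $\ell_i\cap\ell_j$ lies on $\bar Q=0$, and that for each $i$ the binary quadratic form $\bar Q|_{\ell_i}$ has nonzero discriminant: then $\bar Q$ meets each $\ell_i$ in two points and the eight resulting points are pairwise distinct, which is transversality in $8$ points. Each of these conditions asserts that an explicit expression in the $a_{ij}$ — a value of $\bar Q$ at a corner point, or a discriminant, or a resultant — is nonzero modulo $\pi$, and once Weber's formulas are inserted each such expression becomes, up to units and powers of $2$, a product of theta constants avoiding $\thetar\car{100}{001}$, hence a unit. (Note that non‑degeneracy of $\bar Q$ already rules out, for instance, $\bar u_2\propto x_3$ or $\bar u_3\propto x_2$, trimming the list of cases.)

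The genuine obstacle is the bookkeeping in the last two paragraphs: for each resultant, discriminant and corner value one must pin down precisely which product of theta constants it equals after the substitutions, and invoke the right identity from \cite{weber} to see that $\thetar\car{100}{001}$ is not among the factors. The symplectic normalization singling out $\thetar\car{100}{001}$ as the unique theta constant of positive valuation is exactly what makes every such check succeed; carrying it out uniformly for all the expressions that occur is the substance of the proof.
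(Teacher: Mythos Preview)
Your outline is sound and follows the same underlying strategy as the paper: make $\bar u_2,\bar u_3,\bar u_1'$ explicit from the reduced linear system and then reduce both assertions to the non-vanishing modulo $\pi$ of finitely many explicit theta expressions. The organisation, however, differs in two places, and you underestimate one of the checks.

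\emph{Organisation.} For transversality, the paper does not test corner points and discriminants separately; it instead intersects $\bar Q=0$ with each of the four lines $x_2=0$, $x_1=0$, $\bar u_2=0$, $\bar u_1'=0$, writes the eight points $P_1,\dots,P_8$ down explicitly, and then checks $P_i\ne P_j$ case by case. For non-degeneracy of $\bar Q$, the paper does \emph{not} compute the Gram determinant: it argues by contradiction that if $\bar Q=L_1L_2$ then the eight already-found points would have to split $4+4$ over the two lines with the pairs $\{P_1,P_2\},\dots,\{P_7,P_8\}$ separated, and a short chase forces a line to pass through points it manifestly cannot. This avoids having to identify the Gram determinant as a theta product, which you assert but do not verify.

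\emph{The one hard check.} Your expectation that every needed quantity is ``a product of theta constants avoiding $\thetar\car{100}{001}$, hence a unit'' is too optimistic for the line $\bar u_1'=0$. There the relevant quantities are $a_{12}a_{13}-a_{32}a_{33}$ (leading coefficient) and $a_{13}a_{32}-a_{12}a_{33}$ (discriminant), which are \emph{differences} of theta products; Weber's tables do not give these directly. The paper brings in an additional linear relation (from \cite{Fiorentino}) to get a usable reduced equation for $\bar u_1'$, and then has to \emph{derive} a theta identity --- showing that a certain difference $D$ of two degree-$4$ theta monomials equals $\pm$ a single degree-$4$ theta monomial --- by specialising a three-term Riemann relation and tracking signs via \cite{rauch}. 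This is the substantive step you flag as ``bookkeeping'', but it is more than bookkeeping: the identity is not in Weber and must be proved. If you pursue your corner-point/discriminant scheme, the same difference $a_{12}a_{13}-a_{32}a_{33}$ (or its companion) will surface in the value of $\bar Q$ at the corners on $\bar u_1'=0$ and in the discriminant of $\bar Q|_{\bar u_1'=0}$, and you will need the same identity.
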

\begin{proof} 

Let us write down the reduction of eq.~\eqref{eq:u} before studying the intersection points (here $\epsilon=0$ if $v_1>v_0$ and 1 otherwise),
\begin{align}
\bar{u}_2 + \bar{u}_3 &= x_1 + x_2 + x_3\,, \label{eq:ru1}\\
\epsilon \frac{\bar{u}'_1}{a'_{11}} + \frac{\bar{u}_2}{a_{12}} + \frac{\bar{u}_3}{a_{13}} &= a_{12} x_2 + a_{13} x_3\,, \label{eq:ru2}\\
\frac{\bar{u}_2}{a_{22}} + \frac{\bar{u}_3}{a_{23}} &= a_{21} x_1 + a_{22} x_2 + a_{23} x_3\,. \label{eq:ru3}
\end{align}
One can also add the following equation (see \cite[Prop. 2]{Fiorentino}):
\begin{equation*}
\frac{u_1}{a_{31}} + \frac{u_2}{a_{32}} + \frac{u_3}{a_{33}} = a_{31}x_1+a_{32} x_2 + a_{33} x_3, 
\end{equation*}
which reduces to 
\begin{equation}
\epsilon \frac{\bar{u}_1}{a'_{31}} + \frac{\bar{u}_2}{a_{32}} + \frac{\bar{u}_3}{a_{33}} = a_{32} x_2 + a_{33} x_3. \label{eq:ru4}
\end{equation}

We compute the intersection of $\bar{Q}=0$ successively with $x_2=0, x_1=0, \bar{u}_2=0$ and then $\bar{u}_1'=0$. 

$\bullet$ { Intersection with $x_2=0$}.
$$
\{\bar{Q}=0\} \cap \{ x_2 = 0 \} = \{ x_3 \bar{u}_3 = 0\} \cap \{x_2 = 0 \} = 
\{ P_1 = (1:0:0)\} \cup (\{ \bar{u}_3=0\} \cap \{x_2=0\}).
$$
To compute the second intersection point above, denoted $P_2$, we see from equations~\eqref{eq:ru1} and \eqref{eq:ru3} that
\begin{equation}\label{eq:ubis3}
\left(\frac{1}{a_{23}} - \frac{1}{a_{22}}\right) u_3 = \left(a_{21} - \frac{1}{a_{22}}\right) x_1 + \left(a_{22} - \frac{1}{a_{22}}\right) x_2 + \left(a_{23} - \frac{1}{a_{22}}\right)x_3.
\end{equation}
Letting $x_2=0$ and $\bar{u}_3=0$ we need to look at the valuation of $a_{21} - {1}/{a_{22}}$.  As in Lemma~\ref{nonnegv}, Weber
\cite[16.11, 16.14]{weber} gives an expression for this difference in terms of the theta constants 
$$ (a_{21}a_{22}-1)(a_{22})^{-1}=-\frac{\thetar\car{110}{000}\thetar\car{000}{011}\thetar\car{000}{100}\thetar\car{110}{111}}{\thetar\car{011}{011}\thetar\car{101}{000}\thetar\car{101}{111}\thetar\car{011}{100}}(-i)\frac{\thetar\car{101}{111}\thetar\car{011}{100}}{\thetar\car{110}{001}\thetar\car{000}{010}}=i\frac{\thetar\car{110}{000}\thetar\car{000}{011}\thetar\car{000}{100}\thetar\car{110}{111}}{\thetar\car{011}{011}\thetar\car{101}{000}\thetar\car{110}{001}\thetar\car{000}{010}}$$
 and one can check that the valuation is $0$. Hence we get   $P_2 = \displaystyle\left(\frac{a_{23}-{1}/{a_{22}}}{{1}/{a_{22}}-a_{21}}:0:1\right)$.

$\bullet$ { Intersection with $x_1=0$}.  We  deal
with $\{\bar{Q}=0\} \cap \{ x_1 = 0 \}$ in the same way. We use
eq.~\eqref{eq:ubis3} in conjunction  with
\begin{equation}\label{eq:ubis2}
\left(\frac{1}{a_{22}} - \frac{1}{a_{23}}\right) u_2 = \left(a_{21} - \frac{1}{a_{23}}\right) x_1 + \left(a_{22} - \frac{1}{a_{23}}\right) x_2 + \left(a_{23} - \frac{1}{a_{23}}\right) x_3,
\end{equation}
and $x_1=0$ to get that the coordinates of the intersection points must satisfy 
$$
a_{22} x_{2}^2
+ (a_{22} + a_{23}) x_2 x_3 + a_{23} x_{3}^2=0.
$$
The discriminant of this quadratic form has valuation $0$ since $v(a_{22} - a_{23})=0$, still using \cite[16.14]{weber}.
We have that $-1$ and $- \frac{a_{23}}{a_{22}}$ are the two different roots of this polynomial with $x_3 = 1$, and we get that 
$$
\{\bar{Q}=0\} \cap \{ x_1 = 0 \} = \{ P_3 = (0:-1:1),\, P_4 = (0 : - \frac{a_{23}}{a_{22}} : 1) \}.
$$ 

$\bullet$ { Intersection with $\bar{u}_2=0$}. The case $\{\bar{Q}=0\} \cap \{ \bar{u}_2 = 0 \}$ is also similar.  Thanks to  \cite[16.14]{weber}, $v(a_{23}a_{22}-1)=v(a_{23}a_{21}-1)=0$ and so if $x_3 = 0$, eq.~\eqref{eq:ubis2} gives the intersection point
$$
P_5 = \left(1: - \frac{a_{23}a_{22}-1}{a_{23}a_{21}-1}:0\right).
$$
If $\bar{u}_3 = 0$, we use equations~\eqref{eq:ru1} and \eqref{eq:ru3} to compute a second intersection  point
$$
P_6 = (a_{22} - a_{23}:a_{23} - a_{21}:a_{21} - a_{22}).
$$

$\bullet$ {Intersection with $\bar{u}'_1=0$}. The case
$\{\bar{Q}=0\} \cap \{ \bar{u}'_1 = 0 \}$ is a bit more delicate as we miss
some relations from Weber which we  have to work out. Letting $x_3=1$ we can write $\bar{u}_3=x_2\bar{u}_2$. We use
this to write $\bar{u}_2$ in terms of $x_2$ in eq.~\eqref{eq:ru4} and in
eq.~\eqref{eq:ru2}. Equating  both expressions
for $\bar{u}_2$, we get a degree two equation for $x_2$:
$$
(a_{12}a_{13}-a_{32}a_{33})\left(\frac{1}{a_{13}a_{33}}x_2^2+\left(\frac{1}{a_{12}a_{33}}+\frac{1}{a_{13}a_{32}}\right)x_2+\frac{1}{a_{12}a_{32}}\right)=0.
$$
We first need to prove that it indeed defines a degree $2$ equation, that is, that the leading coefficient $a_{12}a_{13}-a_{32}a_{33}$ is not zero. Secondly, in order to prove that we obtain two different solutions for $x_2$, we need to prove that the discriminant $\left(\frac{a_{13}a_{32}-a_{12}a_{33}}{a_{12}a_{13}a_{32}a_{33}}\right)^2$ is also different from zero.

 We  first prove that the valuation of the leading coefficient is $0$.
By \cite[16.14]{weber},
$$
a_{12}a_{13}-a_{32}a_{33}=\frac{\thetar\car{000}{110}\thetar\car{010}{100}}{\thetar\car{111}{000}\thetar\car{101}{010}}
\left(\frac{\thetar\car{010}{001}\thetar\car{000}{011}}{\thetar\car{101}{111}\thetar\car{111}{101}}-\frac{\thetar\car{100}{010}\thetar\car{110}{000}}{\thetar\car{011}{100}\thetar\car{001}{110}}\right).
$$
All the theta constants involved in the formula are units, so we only need to check that
$$
D=\thetar\car{010}{001}\thetar\car{000}{011}\thetar\car{011}{100}\thetar\car{001}{110}-\thetar\car{100}{010}\thetar\car{110}{000}\thetar\car{101}{111}\thetar\car{111}{101}
$$
is also a unit. We claim that 
$$
D=\pm\thetar\car{011}{000}\thetar\car{001}{010}\thetar\car{010}{101}\thetar\car{000}{111}
$$
and so we get the result. The proof of the claim is a well-known game with (classical) theta constants that we will now play. By
 \cite[Chap. II, Theorem 18]{rauch} with the notation from there $$\car{\epsilon(1)}{\epsilon'(1)}=\car{010}{001},  \car{\epsilon(2)}{\epsilon'(2)}=\car{100}{010},  \car{\epsilon(3)}{\epsilon'(3)}=\car{011}{000},  \car{\lambda}{\lambda'}=\car{010}{010} \; \textrm{and} \; \car{\mu}{\mu'}=\car{001}{101}$$  one gets
\small
\begin{align} 
\thetar\car{010}{001}\thetar\car{000}{011}\thetar\car{011}{100}\thetar\car{001}{110}\pm\thetar\car{100}{010}\thetar\car{110}{000}\thetar\car{101}{111}\thetar\car{111}{101}\nonumber \\
\pm\thetar\car{011}{000}\thetar\car{001}{010}\thetar\car{010}{101}\thetar\car{000}{111}=0.\label{eq:3thetas}
\end{align}
\normalsize We need to prove that the first sign is negative to get our expression of $D$. We
consider this relation for Riemann matrices\footnote{using a diagonal matrix $\tau$
  we would have only got  that the second sign is negative, which is not what we want.}
$\tau=\begin{pmatrix}* & 0 & *\\ 0 & * & 0\\ * & 0 & *\end{pmatrix}$. By
\cite[Chap. I, Theorem 11]{rauch} the equation~\eqref{eq:3thetas} simplifies
to 
$$
\thetar^2\car{1}{0}\thetar^2\car{0}{1}\left( \thetar^2\car{00}{01}\thetar^2\car{01}{10}\pm\thetar^2\car{10}{00}\thetar^2\car{11}{11}\pm\thetar^2\car{01}{00}\thetar^2\car{00}{11}\right) =0.
$$
Now we use \cite[Chap. I, Theorem 5]{rauch} with the notation there $$\car{\epsilon(1)}{\epsilon'(1)}=\car{01}{10}, \car{\epsilon(2)}{\epsilon'(2)}=\car{10}{00}, \car{\epsilon(3)}{\epsilon'(3)}=\car{01}{00}, \; \textrm{and} \;  \car{\mu}{\mu'}=\car{10}{01}$$to rewrite the expression inside the parentheses above as
$$
\thetar^2\car{00}{01}\thetar^2\car{11}{11}\pm\thetar^2\car{20}{01}\thetar^2\car{11}{11}\pm\thetar^2\car{11}{01}\thetar^2\car{00}{11}=0.
$$
The third term is zero since the characteristic $\car{11}{01}$ is odd. Finally, since by \cite[Chap. I, Theorem 3]{rauch} $
\thetar\car{20}{01}=-\thetar\car{00}{01}$, the previous equation can be realized only if the first sign is a minus sign. This proves our claim on $D$ and we get a degree $2$ equation for $x_2$.

The same arguments work for proving that the discriminant has valuation
$0$. Hence, the intersection $\{\bar{Q}=0\} \cap \{ \bar{u}'_1 = 0 \}$ consists of two
distinct points $\{ P_7,\,P_8 \}$.



$\bullet$ The intersection is transverse. We need to prove that $P_i \neq P_j$ for all distinct $i,j\in\{1,...,8\}$. Recall that:
$$
P_1=(1:0:0),\,P_2=(a_{23}-{1}/{a_{22}}:0:1),\,P_3=(0:-1:1),\,P_4=(0:-a_{23}:a_{22}),
$$ 
$$
P_5=(a_{23}a_{21}-1:1-a_{23}a_{22}:0),\,P_6=(a_{22} - a_{23}:a_{23} - a_{21}:a_{21} - a_{22}),\,P_7,\,P_8.
$$
The six first point are clearly distinct, and $P_7$ and $P_8$ are distinct. We will check now that they are also distinct from the first six points. For that we  study the following intersections:

$\{\bar{Q}=0\} \cap \{ \bar{u}'_1 = 0 \} \cap \{x_2 = 0\}$: we have here
$\bar{u}'_1 = x_2 = 0$ and $x_3 = 1$, hence $\bar{u}_3 =0$ from $\bar{Q}=0$
and from equations~\eqref{eq:ru2} and \eqref{eq:ru4} we get
$a_{12}a_{13}-a_{32}a_{33}=0$, which we have just seen is not. Hence, the
intersection in empty and the points $P_7$ and $P_8$ are distinct from the
points $P_1$ and $P_2$.

$\{\bar{Q}=0\} \cap \{ \bar{u}'_1 = 0 \} \cap \{x_2 = 0\}$: we get
$x_1=\bar{u}'_1=0$, $x_3=1$ and $u_3=x_2u_2$. From eq.~\eqref{eq:ru3} we get
$\bar{u}_3=x_2$ and hence equations~\eqref{eq:ru1} and \eqref{eq:ru3} implies
$a_{13}a_{32}-a_{12}a_{33}=0$. We have also already seen that this is not
possible. Therefore, the points $P_7,\,P_8$ are distinct from the points
$P_3,\,P_4$.

$\{\bar{Q}=0\} \cap \{ \bar{u}'_1 = 0 \} \cap \{\bar{u}_2 = 0\}$: in this situation we get $\bar{u}'_1=\bar{u}_2=\bar{u}_3=0$, which is not possible since those forms are linearly independent. We then conclude that $P_7,\,P_8$ are also distinct from $P_5$ and $P_6$. 

$\bullet$ {The conic $\bar{Q}=0$ is non-singular}.  If it were, $\bar{Q}$ would be the product of two
linear forms $L_1$ and $L_2$. None of these lines $L_i=0$ is equal to
$x_1 = 0,\,x_2 = 0,\,\bar{u}'_1 = 0$ or $\bar{u}_2 = 0$, since the
intersection of the conic $\bar{Q}=0$ with each of them is two distinct
points. Moreover, $L_1$ and $L_2$ define distinct lines. Hence, we have that
$4$ of the points $P_1,...,P_8$ are on $L_1=0$ and the other $4$ on $L_2=0$,
where points in the pairs $\{P_1,P_2\}$, $\{P_3,P_4\}$, $\{P_5,P_6\}$,
$\{P_7,P_8\}$ are on different lines. Assume that $P_1$ is on $L_1$ and $P_2$
on $L_2$. Then $P_5$ is on $L_2$ since otherwise $L_1=0$ would be
$\{ x_3 = 0 \}$ but $\{ x_3 = 0 \}$ is not contained in $\bar{Q}=0$ by
eq.~\eqref{eq:ubis2}. Hence, we get
$$
\{L_1 = 0\} = \overline{P_1P_6}:\, (a_{21} - a_{22}) x_2 + (a_{21} - a_{23}) x_3 = 0.
$$
Now, it is easy to check that neither $P_3$ or $P_4$ are on $L_1=0$: for $P_3$ it is enough to check that $a_{22}-a_{23}\neq0$ and for $P_4$ that $a_{21}(a_{22} - a_{23}) \neq 0$, which is true by \cite[16.14]{weber}.

This gives a contradiction with the assumption $\bar{Q} = L_1\cdot L_2$. Hence, $\bar{Q}=0$ defines a non-singular conic.

\end{proof}
	


\section{Bitangents of a smooth plane quartic}  \label{sec:bitangents}
Let  $C/K$ be a smooth plane quartic curve given by $F=0$.  As $C$ is
canonically embedded in the projective plane, the canonical
divisors  on $C$ are the intersection of the
lines with $C$. In particular we can describe the $28$ odd theta characteristic divisors using bitangents. 

\begin{definition}
	A line $\beta$ is called a \emph{bitangent} of $C$ if the intersection divisor $(\beta \cdot C)$ is of the form $2P+2 Q$ for some not necessarily distinct points 
	$P,Q$ of $C$. The divisor $D_{\beta}:=P+Q$ is the odd theta characteristic divisor associated to $\beta$.
\end{definition}

\begin{remark}\label{rmk:construction}
	The bitangents of a curve can be computed by looking at the singular points of the dual curve, but this is rather expensive in terms of computations since the singularities also contains the tangents at inflexion points. A better approach is to work out the two algebraic conditions in $a$ and $b$ under which the form $F(x_1,a x_1+b,1)$ is a perfect square and to look for the solutions of the corresponding system. Of course, one has to take care of the bitangents which are not of the form $x_2=a x_1+b$.  
\end{remark}

\begin{definition}\label{def:asy}
	Let $S=\{\beta_i\}$ be a set of bitangents of $C$. The set $S$ is
	called \textit{azygetic} if there is no conic $Q$ such that the intersection divisor $Q \cdot C \geq D_{\beta_1}+D_{\beta_2}+D_{\beta_3}$ for $\beta_1,\beta_2,\beta_3 \in S$.
 An azygetic set of $7$ elements is called an \emph{Aronhold set}.
\end{definition}


Among the $\binom{28}{7}$ subsets of $7$ bitangents of $C$, 288 of them form an Aronhold set, see \cite[after Cor. 2.5]{grossharris}. Note that if one knows equations for the $28$ bitangents and the curve, a tedious but straighforward computation using the definition of azygeticness allow to exhibit an Aronhold set.\\

 We resume with the notation from the previous sections. Let us recall the following result \cite{riemann}.

\begin{theorem} \label{thm:Riemann}
	Let $C$ be a smooth plane quartic curve and $S=\{\beta_i\}_{i=1,\ldots,7}$ be an Aronhold set. After a linear change of variables, we may assume that the $\beta_i$ are given by the equations:
	$$
	\begin{array}{ccc}
	\beta_i : x_i=0 &	\beta_4 : x_1+x_2+x_3=0 & \beta_{4+i} : a'_{i1} x_1+a'_{i2} x_2+a'_{i3} x_{3}=0,
	\end{array}
	$$
	where $i\in\{1,2,3\}$. The coefficients $a'_{ij}$ are multiples of the $a_{ij}$ (defined in Section~\ref{sec:theta}), i.e. $a'_{ij}=\eta_ia_{ij}$ for some $\eta_i \ne 0$, that are determined, up to sign, by the linear system:
	$$
	\begin{bmatrix}
	\lambda_1{a'_{11}} & \lambda_2{a'_{21}} & \lambda_3{a'_{31}}\\
	\lambda_1{a'_{12}} & \lambda_2{a'_{22}} & \lambda_3{a'_{32}}\\
	\lambda_1{a'_{13}} & \lambda_2{a'_{23}} & \lambda_3{a'_{33}}\\
	\end{bmatrix}\begin{bmatrix}
	1/\eta_1^2 \\ 1/\eta_2^2 \\ 1/\eta_3^2
	\end{bmatrix}= \begin{bmatrix}
	-1\\-1\\-1
	\end{bmatrix},
	$$
	where $\lambda_1$, $\lambda_2$, $\lambda_3$ are given by
	$$
	\begin{bmatrix}
	\frac{1}{a'_{11}} & \frac{1}{a'_{21}} & \frac{1}{a'_{31}}\\
	\frac{1}{a'_{12}} & \frac{1}{a'_{22}} & \frac{1}{a'_{32}}\\
	\frac{1}{a'_{13}} & \frac{1}{a'_{23}} & \frac{1}{a'_{33}}\\
	\end{bmatrix}\begin{bmatrix}
	\lambda_1 \\ \lambda_2 \\ \lambda_3
	\end{bmatrix}= \begin{bmatrix}
	-1\\-1\\-1
	\end{bmatrix}.
	$$
	After the previous change of variables, the plane quartic $C$ is given by a {Riemann model}
	$$\left(x_1 u_1 + x_2 u_2 -x_3 u_3\right)^2-4 x_1 u_1 x_2 u_2 =0,$$
	where $u_1,u_2,u_3$ are  given as in Section~\ref{sec:theta} by
	$$\begin{cases}
	u_1 + u_2 + u_3 +x_1 +x_2+x_3 =0, \\
	\frac{u_1}{a_{i1}} + \frac{u_2}{a_{i2}} +\frac{u_3}{a_{i3}}+ a_{i1} x_1 + a_{i2}
	x_2+ a_{i3}
	x_3=0.
	\end{cases}$$
	Moreover, we can express all the bitangents for this model as:
	$$
	\begin{array}{ccc}
	\beta_i : x_i=0 &	\beta_4 : x_1+x_2+x_3=0 & \beta_{ij} : u_k=0 \\
	\beta_{4+i} : a_{i1} x_1+a_{i2} x_2+a_{i3} x_{3}=0  &	\beta_{i4}: u_i+x_j+x_k=0 & \beta_{i(4+l)}: 
	\frac{u_i}{a_{li}}+ a_{lj} x_j+ a_{lk} x_k=0 \\
	\end{array}$$
	$$\begin{array}{cc}
	\beta_{4(4+i)} : \frac{u_1}{a_{i1}(1- a_{i2} a_{i3})} +\frac{u_2}{a_{i2} (1-  a_{i3} a_{i1})} +
	\frac{u_3}{a_{i3}  (1- a_{i1} a_{i2})}=0 & \beta_{56} : \frac{u_1}{1-a_{32} a_{33}} +\frac{u_2}{1-  a_{33} a_{31}} +
	\frac{u_3}{1- a_{31} a_{32}}=0 \\
	\end{array}$$
	$$\begin{array}{cc}
	\beta_{57} : \frac{u_1}{1- a_{22} a_{23}} +\frac{u_2}{1-  a_{23} a_{21}} +
	\frac{u_3}{1- a_{21} a_{22}}=0 &
	\beta_{67} : \frac{u_1}{1- a_{12} a_{13}} +\frac{u_2}{1-  a_{13} a_{11}} +
	\frac{u_3}{1- a_{11} a_{12}}=0, \\
	\end{array}
	$$
	where $i,l\in\{1,2,3\}$ and $\{i,j,k\}=\{1,2,3\}$.
\end{theorem}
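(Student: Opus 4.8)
The plan is to reconstruct Riemann's classical construction (\cite{riemann}; see also \cite[\S 16]{weber}) in three stages: normalize the Aronhold configuration by a projective change of variables, write down and verify the Riemann model, and finally read off all $28$ bitangents from that model.

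\emph{Normalization.} An Aronhold set is in general linear position; in particular no three of its seven lines are concurrent (a classical fact from the structure theory of azygetic sets). Hence $\beta_1,\beta_2,\beta_3,\beta_4$ are four lines in general position, so there is a unique projective change of coordinates sending them to $x_1=0$, $x_2=0$, $x_3=0$, $x_1+x_2+x_3=0$, and afterwards each $\beta_{4+i}$ is a line $a'_{i1}x_1+a'_{i2}x_2+a'_{i3}x_3=0$ with all $a'_{ij}\ne0$. The point $(a'_{i1}:a'_{i2}:a'_{i3})$ of the dual plane is well defined but its representative is not; the two auxiliary linear systems in the statement are Riemann's recipe for selecting the representative $a_{ij}=a'_{ij}/\eta_i$: first solve $\sum_i\lambda_i/a'_{ij}=-1$ for $(\lambda_1,\lambda_2,\lambda_3)$, then $\sum_i\lambda_i a'_{ij}/\eta_i^{-2}$... i.e.\ $\sum_i\lambda_i a'_{ij}\,\eta_i^{-2}=-1$ for $(\eta_1^{-2},\eta_2^{-2},\eta_3^{-2})$ (the relevant determinants being non-zero for an Aronhold configuration, a consequence of Weber's relations), which pins the $\eta_i$, hence the $a_{ij}$, down to sign. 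That these normalized coefficients are exactly Weber's ratios of Thetanullwerte is the substantive content of Riemann's theorem; I would invoke \cite{riemann,weber} for it, transporting the identities from classical to algebraic theta constants through \eqref{eq:theta}, exactly as in the proof of Theorem~\ref{Thm:charThetas}.

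\emph{The model.} Introduce linear forms $u_1,u_2,u_3$ by the displayed system — $u_1+u_2+u_3=-(x_1+x_2+x_3)$ and $u_1/a_{i1}+u_2/a_{i2}+u_3/a_{i3}=-(a_{i1}x_1+a_{i2}x_2+a_{i3}x_3)$ for $i=1,2$ (the remaining relation then following from an identity among theta constants, compare \cite{Fiorentino}) — and set $F=(x_1u_1+x_2u_2-x_3u_3)^2-4x_1u_1x_2u_2$. Restricting $F$ to any of $x_1=0$, $x_2=0$, $x_3=0$, $u_1=0$, $u_2=0$, $u_3=0$ yields the square of a binary quadratic form (one summand vanishes and the cross term recombines), so those six lines are bitangent to $\{F=0\}$ with no further condition. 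Substituting $x_1+x_2+x_3=0$ and using $\sum u_i+\sum x_i=0$ turns $F$ into $(x_2u_3-x_3u_2)^2$, so $\beta_4$ is bitangent; substituting $a_{i1}x_1+a_{i2}x_2+a_{i3}x_3=0$ and using the relation $\sum_j u_j/a_{ij}=-\sum_j a_{ij}x_j$ likewise turns $F$ into a perfect square, so $\beta_{4+i}$ is bitangent. Thus $\{F=0\}$ is a plane quartic having the seven lines of $S$ among its bitangents in Aronhold position; since such a quartic is uniquely determined (there being $288$ Aronhold heptads among the $\binom{28}{7}$, \cite[after Cor. 2.5]{grossharris}), $\{F=0\}=C$ in the normalized coordinates, giving the asserted Riemann model and, in particular, the equality $(a'_{i1}:a'_{i2}:a'_{i3})=(a_{i1}:a_{i2}:a_{i3})$.

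\emph{The $28$ bitangents and the main obstacle.} Every remaining entry of the table is handled by the same device: substitute the candidate line into $F$ and use the linear relations among the $u_i$ — together with, for the lines $\beta_{4(4+i)}$, $\beta_{56}$, $\beta_{57}$, $\beta_{67}$, classical identities among the $a_{ij}$ of the kind collected in \cite[\S 16]{weber} — to recognize the restriction as the square of a binary quadratic form; one then checks that the list is complete by the count $3+1+3+3+3+9+3+3=28$. I expect the genuine obstacle to be the identification, made in the normalization step, of the geometric coefficients $a_{ij}$ of $\beta_5,\beta_6,\beta_7$ with Weber's ratios of theta constants: this is the deep part of Riemann's theorem and relies on the analytic theory of theta functions (transformation formulas, Jacobi-type identities) rather than the elementary algebra used in the rest of the argument. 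The only other delicate points are the sign ambiguities in the $\eta_i$ and keeping track of which $a_{ij}$-relations are needed for the last dozen bitangent verifications.
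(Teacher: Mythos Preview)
The paper does not prove this theorem at all: it is introduced with the phrase ``Let us recall the following result \cite{riemann}'' and is stated without any accompanying argument, the text moving directly to Proposition~\ref{prop:3cases}. So there is no paper-proof to compare against; your sketch is already more detailed than the paper's treatment.

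What you outline is precisely the classical route (Riemann, Weber, and later Fiorentino) that the paper is citing. Your sample verifications that $x_i=0$, $u_i=0$ and $x_1+x_2+x_3=0$ restrict $F$ to a perfect square are correct, and closing via uniqueness of a quartic from an Aronhold heptad is the standard device. You also correctly isolate the one substantive step --- identifying the normalized geometric coefficients $a_{ij}$ with Weber's theta-constant ratios --- as the part that cannot be done by elementary algebra and must be imported from \cite{riemann,weber}; this is exactly why the paper simply cites the result rather than proving it. One small remark: the claim that $\beta_{4+i}$ becomes a bitangent after substituting $a_{i1}x_1+a_{i2}x_2+a_{i3}x_3=0$ and the companion relation $\sum_j u_j/a_{ij}=0$ is not quite as immediate as the others (the two linear constraints alone do not force the square; one needs the further quadratic identities among the $a_{ij}$ from \cite[\S16]{weber}), but you have already flagged these identities as needed for the later bitangents, so this is only a matter of bookkeeping.
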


\begin{proposition}\label{prop:3cases}
Let assume that $C$ has potentially good hyperelliptic reduction. Given an Aronhold set $S$ as in Theorem \ref{thm:Riemann}, there exists a constant $v_0>0$ such that either
\begin{itemize}
	\item[(1)] two of the $a_{ij}$ (with the same value of $j$) have positive valuation $v_0$ and the rest have valuation equal to zero; 
	\item[(2)] or two of the $a_{ij}$ (with the same value of $j$) have negative valuation $-v_0$ and the rest have valuation equal to zero;
	\item[(3)] or all the $a_{ij}$ have valuation equal to zero.
\end{itemize}
\end{proposition}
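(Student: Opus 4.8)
The plan is to deduce the statement from Theorem~\ref{Thm:charThetas} together with a bookkeeping of Weber's formulas for the $a_{ij}$. By Theorem~\ref{Thm:charThetas}, the hypothesis that $C$ has potentially good hyperelliptic reduction says precisely that, among the $36$ integral theta constants $\thetac_0,\dots,\thetac_{35}$, exactly one has positive valuation; call it $\thetac_\star$ and put $v_0=v(\thetac_\star)>0$. All the other $35$ non-vanishing theta constants are then units, and the $28$ odd theta constants vanish. Now each $a_{ij}$ is, by Weber's construction (\cite[p.~108]{weber}, see also \cite[Prop.~2]{Fiorentino}), a homogeneous quotient of products of even theta constants, an identity which by~\eqref{eq:theta} transfers to the integral theta constants up to a unit. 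Since — as one checks from the explicit formulas — each of these quotients involves eight pairwise distinct theta constants, we obtain
$$
v(a_{ij})=v_0\cdot n_{ij},\qquad n_{ij}\in\{-1,0,1\},
$$
where $n_{ij}$ is the net exponent with which $\thetac_\star$ appears in Weber's expression for $a_{ij}$. The proposition is therefore equivalent to the purely combinatorial assertion that, whichever even theta characteristic plays the role of $\thetac_\star$, the integer matrix $(n_{ij})$ is either zero, or has exactly two entries equal to $+1$ lying in a common column and zeros elsewhere, or exactly two entries equal to $-1$ in a common column and zeros elsewhere.

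To keep this verification finite I would exploit the evident symmetries of the Riemann model relative to an Aronhold set. Permuting $x_1,x_2,x_3$ preserves the equations $\beta_i\colon x_i=0$ and $\beta_4\colon x_1+x_2+x_3=0$ and permutes the columns of $(a_{ij})$; relabelling $\beta_5,\beta_6,\beta_7$ permutes its rows. Hence the matrix $(n_{ij})$ depends only on the orbit of $\thetac_\star$ under the induced $S_3\times S_3$-action on the $36$ even theta characteristics, and there is only a short list of such orbits. One then computes $(n_{ij})$ for a representative of each orbit. For the orbit treated in the proof of Theorem~\ref{Thm:charThetas}, where one may take $\thetac_\star=\thetar\car{100}{001}$, the computation made there already gives $n_{11}=n_{31}=1$ and $n_{ij}=0$ otherwise — case~(1); the orbits on which $\thetac_\star$ instead occurs in the \emph{denominators} of Weber's formulas yield the negated pattern — case~(2); and for the orbit(s) on which $\thetac_\star$ cancels between numerators and denominators of every $a_{ij}$ one gets $(n_{ij})=0$ — case~(3). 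The further symmetry of the configuration exchanging the two triples of bitangents $\{\beta_1,\beta_2,\beta_3\}$ and $\{\beta_5,\beta_6,\beta_7\}$ can be used to shorten the checking and to pair up cases~(1) and~(2).

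The actual content — and the step I expect to be the main obstacle — is this orbit-by-orbit computation: one must write out Weber's nine formulas for the $a_{ij}$ explicitly in terms of the even theta constants, using \cite[p.~108]{weber}, \cite[Prop.~2]{Fiorentino} and the product identities \cite[16.11, 16.14]{weber} already exploited in Section~\ref{Sec:Weber}, check that each is of the clean shape $\pm\,\thetar_a\thetar_b\thetar_c\thetar_d/\thetar_e\thetar_f\thetar_g\thetar_h$ with eight distinct even characteristics (so that $n_{ij}\in\{-1,0,1\}$), and then count occurrences of $\thetac_\star$. What forces the trichotomy rather than some wilder pattern is the combinatorial fact one has to read off from these formulas: apart from the special class, every even characteristic occurs with net exponent $+1$ (respectively $-1$) in exactly two of the $a_{ij}$, and those two lie in the same column. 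Granting this, the proposition follows, the constant of the statement being simply $v_0=v(\thetac_\star)$.
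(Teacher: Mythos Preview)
Your proposal is correct and follows essentially the same route as the paper: invoke Theorem~\ref{Thm:charThetas} to isolate a unique $\thetac_\star$ with $v(\thetac_\star)=v_0>0$, then read off the valuations of the $a_{ij}$ from Weber's explicit formulas \cite[p.~108, (11')]{weber}; the paper simply says ``consider the different cases'' and works out one representative, whereas you organize the same case-check via the $S_3\times S_3$ symmetry, which is a pleasant refinement but not a different argument. One small slip: Weber's formulas express each $a_{ij}$ as a quotient of \emph{two} theta constants by \emph{two} (so four in all, not eight --- see e.g.\ the displayed $a_{11},a_{31}$ in the paper's proof), though this does not affect your conclusion that $n_{ij}\in\{-1,0,1\}$.
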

\begin{proof}
	Look at the expressions for the coefficients $a_{ij}$ in terms of the theta constants in page $108$ formula $(11')$ in \cite{weber} and consider the different cases. For example, if the theta constant that has positive valuation $v_0$ is $\thetar\car{100}{001}$ as in the proof of Theorem \ref{Thm:charThetas}, we can see that all $a_{ij}$ have zero valuation except
	$$
	a_{11}=\frac{\thetar\car{100}{001}\thetar\car{000}{101}}{\thetar\car{101}{000}\thetar\car{001}{100}}\text{ and }a_{31}=\frac{\thetar\car{110}{110}\thetar\car{100}{001}}{\thetar\car{001}{100}\thetar\car{011}{110}}
	$$
	that have also positive valuation $v_0$, so we are in case $(1)$ of the Proposition.
\end{proof}

\begin{theorem}\label{thm:goodAronhold}
Let assume that $C$ has potentially good hyperelliptic reduction. Given an Aronhold set $S =\{\beta_i \}_{i=1,\ldots,7}$  we can construct another Aronhold set $\{\gamma_i \}_{i=1,\ldots,7}$ for which $v(a'_{11})=v(a'_{31})>0$. 
\end{theorem}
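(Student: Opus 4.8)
The plan is to reduce, by replacing $S$ with a suitable other Aronhold set, to the normal form already exploited in the proof of Theorem~\ref{Thm:charThetas}. Denote by $\vartheta$ the unique integral theta constant of positive valuation (it is even). First note that the equality in the statement will come for free: in the Aronhold set we produce, the two coefficients $a'_{11}$ and $a'_{31}$ each carry $\vartheta$ to the first power in their numerator and involve only units otherwise --- by Weber's formulas $(11')$ of \cite{weber}, exactly as in the proof of Proposition~\ref{prop:3cases} --- so $v(a'_{11})=v(a'_{31})=v(\vartheta)>0$. Hence the real content is combinatorial: exhibit an ordered Aronhold set that lands in case~(1) of Proposition~\ref{prop:3cases} and for which the two $a_{ij}$ detecting $\vartheta$ occupy positions $(1,1)$ and $(3,1)$.

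\textbf{Step 1: the easy case.} Applied to $S$, Proposition~\ref{prop:3cases} puts us in case~(1), (2) or (3). If we are already in case~(1), a reordering of $S$ suffices: permuting $\beta_1,\beta_2,\beta_3$ together with the coordinates $x_1,x_2,x_3$ fixes $\beta_4$ and permutes the columns of the matrix $(a_{ij})$, while permuting $\beta_5,\beta_6,\beta_7$ permutes its rows; both operations merely relabel the linear systems of Theorem~\ref{thm:Riemann} and preserve the Aronhold and Riemann-model structure. Since the two coefficients of positive valuation lie in a common column and $S_3$ acts transitively on the $2$-element subsets of $\{1,2,3\}$, one may send that column to the first and the two rows involved to rows $1$ and $3$, producing $\{\gamma_i\}$ with $v(a'_{11})=v(a'_{31})=v(\vartheta)>0$.

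\textbf{Step 2: cases (2) and (3).} Here a reordering of $S$ does not suffice and one must pass to a genuinely different Aronhold set among the $288$. The quickest route is the one already used in the proof of Theorem~\ref{Thm:charThetas}: $\Sp_6(\F_2)$ acts transitively on even theta characteristics, hence on the $288$ Aronhold sets, so there is an ordered Aronhold set $\{\gamma_i\}$ whose associated symplectic normalisation carries $\vartheta$ onto $\thetar\car{100}{001}$; for it the computation in the proof of Proposition~\ref{prop:3cases} shows that exactly $a'_{11}$ and $a'_{31}$ fail to be units, with the valuation profile described above. If instead one wants the transformation from $S$ written down explicitly (as is needed for the algorithm), one replaces one or two members of $S$ --- for instance $\beta_4$ and a suitable $\beta_{4+i}$, or a pair among $\beta_5,\beta_6,\beta_7$ --- by other bitangents from the list in Theorem~\ref{thm:Riemann}: in case~(2) a well-chosen such swap inverts the offending column of $(a_{ij})$ (turning valuation $-v_0$ into $+v_0$), and in case~(3) it brings $\vartheta$ into the expressions for the $a_{ij}$; in both situations one is reduced to case~(1) and concludes by Step~1.

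\textbf{Main obstacle.} The delicate point is the explicit form of Step~2: tracking how the nine quantities $a_{ij}$, hence their valuations, change under an Aronhold-set transformation that is not a permutation of $S$. Concretely this means re-expressing the new $a'_{ij}$ as quotients of products of theta constants and identifying which of them contain $\vartheta$, i.e. one more round of the ``game with theta constants'' of Section~\ref{Sec:Weber}, using Weber's tables \cite{weber} together with the transformation formulas of \cite{rauch}. Once the valuation profile of the new $a'_{ij}$ is pinned down, the conclusion is immediate from the first paragraph.
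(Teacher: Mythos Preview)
Your existence argument via the transitivity of $\Sp_6(\F_2)$ is correct: since ordered Aronhold sets correspond bijectively to symplectic bases of $J[2]$, one can choose an ordered Aronhold set so that the unique non-unit theta constant sits at the label $\thetar\car{100}{001}$, and then the computation recorded in the proof of Proposition~\ref{prop:3cases} gives $v(a'_{11})=v(a'_{31})=v_0>0$. Your Step~1 also matches the paper.

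The difference is that the paper proves more than existence. In each case of Proposition~\ref{prop:3cases} it exhibits an \emph{explicit} new Aronhold set written in terms of the bitangents $\beta_{ij}$ of Theorem~\ref{thm:Riemann}. In case~(2), after permuting so that $a_{11},a_{31}$ have negative valuation, the paper takes $\{\gamma_i\}=\{\beta_{23},\beta_2,\beta_3,\beta_{14},\beta_{15},\beta_{16},\beta_{17}\}$ and checks directly that the new coefficients satisfy $a'_{i1}=1/a_{i1}$ and $a'_{ij}=a_{ij}$ for $j=2,3$ --- precisely the ``inversion of the offending column'' you allude to but do not write down. In case~(3) two subcases are distinguished (according to whether some $1-a_{ij}a_{i(j+1)}$ has positive valuation), and the explicit systems $\{\beta_{23},\beta_{13},\beta_{12},\beta_4,\beta_{45},\beta_{46},\beta_{47}\}$ respectively $\{\beta_5,\beta_6,\beta_7,\beta_4,\beta_1,\beta_2,\beta_3\}$ are used.

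This constructive content is exactly what Step~11 of the algorithm in Section~\ref{sec:computations} relies on. Your abstract transitivity route, while valid for the bare statement, simply re-invokes the reduction already made in the proof of Theorem~\ref{Thm:charThetas} and does not supply the explicit transformation from the \emph{given} $S$. Your ``Main obstacle'' paragraph correctly identifies this as the missing piece; the paper actually carries it out rather than leaving it as a further game with theta identities.
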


\begin{proof}
Notice that if we are already in the first case, then we may assume that $v(a_{1j})=v(a_{3j})=v_0$ for $v_0>0$ and some $j$ after permuting the set $\{\beta_1,\beta_{2},\beta_3\}$ of bitangents, and indeed, that $j=1$ after permuting the set $\{\beta_4,\beta_{5},\beta_6\}$.

If we are in the second case, and after a permutation of the sets $\{\beta_1,\beta_{2},\beta_3\}$ and $\{\beta_4,\beta_{5},\beta_6\}$, we may assume that $a_{11}$ and $a_{31}$ are the ones with negative valuations, that is, $\thetar\car{001}{100}$ is the theta constant with positive valuation equal to $v_0$. Then we can take the Aronhold system:
$$
\gamma_1=\beta_{23},\,\gamma_2=\beta_2,\,\gamma_3=\beta_3,\,\gamma_4=\beta_{14},
$$
$$
\gamma_5=\beta_{15},\,\gamma_6=\beta_{16},\,\gamma_6=\beta_{16}.
$$
So, $a'_{i1}=1/a_{i1}$ and $a'_{ij}=a_{ij}$ for $i\in\{1,2,3\}$ and $j\in\{2,3\}$, and we fall in the first case with $v(a'_{11})=v(a'_{31})=v_0$.

 In the third case we have to distinguish several cases. If any of the differences $1-a_{ij}a_{i(j+1)}$ has positive valuation, then after permutations of the sets $\{\beta_1,\beta_{2},\beta_3\}$ and $\{\beta_4,\beta_{5},\beta_6\}$, we can assume that $\thetar\car{100}{000}$ is the theta constant with positive valuation, then again by using Weber's formulas \cite[pp. 109]{weber}, we have that $\{\gamma_1,\ldots,\gamma_7 \}=\{\beta_{23},\,\beta_{13}\,\,\beta_{12},\,\beta_{4},\,\beta_{45},\,\beta_{46},\,\beta_{47}\}$ is an Aronhold system with $v(a'_{11})=v(a'_{31})=v_0$.
  If none of the differences $1-a_{ij}a_{i(j+1)}$ has positive valuation, then the differences $a_{ij}a_{i(j+1)}-a_{(i+1)j}a_{(i+1)(j+1)}$ have positive valuations. Hence, the Aronhold system $\{\gamma_1,\ldots,\gamma_7 \}=\{\beta_{5},\,\beta_{6}\,\,\beta_{7},\,\beta_{4},\,\beta_{1},\,\beta_{2},\,\beta_{3}\}$ falls in the first case.
\end{proof}



\section{The algorithm and an example}  \label{sec:computations}
In this section we start with  a smooth plane quartic $C/K:\,F=0$, where $K$ is a finite extension of $\mathbb{Q}_p$ with $p \ne 2$. The previous sections lead to the following algorithm to compute a stable model when the curve has potentially good hyperelliptic reduction.\\

\begin{tabular}{ll}
\hline 
& Computation of the stable model of $C$\\
\hline 
\texttt{Input}: &  an equation $F$ and a precision $e$ big enough.\\
\texttt{Output}:&  a stable model of $C$ when $C$ has potentially good hyperelliptic reduction \\
\hline
1 & Compute the set $B$ of bitangents of $C$ over a small extension of $K$ (using remark \ref{rmk:construction})\\
2 & Take any two bitangents $b_1,b_2$ and let $S=\{b_1,b_2\}$ \\
3 & \texttt{While} $\# S \ne 7$:\\
4& \hspace{1cm}  \texttt{For} $b \in B \setminus S$:\\
5& \hspace{2cm}  \texttt{if} $S \cup \{b\}$ is azygetic:\\
6 & \hspace{3cm}  $S=S \cup \{b\}$\\
7 & \hspace{2cm} \texttt{else} \\
8 & \hspace{3cm}  $B=B \setminus \{b\}$\\
9 & Send $4$ elements of $S$ by a change of variables to \\
&  $x_1=0$, $x_2=0$, $x_3=0$ and $x_1+x_2+x_3=0$ and denote by $S_0$  the resulting set \\
10 &  Normalize the coefficients of the elements in $S_0$ with Theorem \ref{thm:Riemann} \\
11 & Use Theorem \ref{thm:goodAronhold} and re-label the bitangents to get positive valuations for $a_{11}$ and $a_{31}$ \\ 
12 & Compute the corresponding Riemann model
\end{tabular}

\begin{remark} The azygetic test on line $(5)$ is perform by checking if the $6$ points of intersection of $3$ of the bitangents in $S \cup \{b\}$ with the curve lie on a conic. Notice that as we are over a non-exact field, this step is harder than over a number field.
\end{remark}


\begin{example}\label{ex:Bas}
Let us consider the plane smooth quartic $C/\Q : F=0$ where
$$F=(x_2+x_3) x_1^3-(2 x_2^2 + x_3 x_2) x_1^2+(x_2^3-x_3 x_2^2+2 x_3^2 x_2-x_3^3) x_1-(2 x_3^2 x_2^2-3 x_3^3 x_2).$$
This curve is an equation of $X_{ns}(13) \simeq X_s(13) \simeq X_0^{+}(169)$ which is studied in \cite[Cor.6.8]{BDMTV17}. Using a general result from~\cite{edixhoven90}, they prove that this curve has good reduction everywhere and potentially good reduction at 13 after a ramified extension of degree 84. Using the characterizations in \cite{LLLR} in terms of the valuations of the Dixmier-Ohno invariants, it is straightforward to see that this is indeed the case and that the stable reduction at 13 is the hyperelliptic curve $y^2=z(x^7-z^7)$. We apply our algorithm to find an equation for the stable model at $13$.

 Starting from $F$, we get 
  the following Aronhold system  modulo $13^2$:
  $$
  \begin{array}{cccc}
    x_1 = 0,\  x_2 = 0,\  x_3 = 0,\  x_1+x_2+x_3 = 0,\\
  \end{array}
  $$
  $$
\begin{array}{ccc}
    a_{11}\,x_1+a_{12}\,x_2+a_{13}\,x_3 = 0, & 
    a _{21}\,x_1+a _{22}\,x_2+a_{23}\,x_3 = 0, &
    a_{31}\,x_1+a_{32}\,x_2+a_{33}\,x_3 = 0,
  \end{array}
  $$
with
\begin{footnotesize}
  \begin{displaymath}
    \setlength{\arraycolsep}{1pt}
  \begin{array}{rcl}
    a_{11} &=& 73\,\pi^6+(156\,\tau+27)\,\pi^5+(4\,\tau+96)\,\pi^4+(28\,\tau+57)\,\pi^3+(8\,\tau+10)\,\pi^2+(78\,\tau+101)\,\pi+38\,\tau+26,\\
    a_{12} &=&  24\,\tau+94)\,\pi^6+(13\,\tau+137)\,\pi^5+(32\,\tau+8)\,\pi^4+(54\,\tau+76)\,\pi^3+(47\,\tau+166)\,\pi^2+(143\,\tau+75)\,\pi+77\,\tau+13,\\
    a_{13} &=&  (153\,\tau+57)\,\pi^6+(167\,\tau+67)\,\pi^5+(29\,\tau+35)\,\pi^4+(83\,\tau+10)\,\pi^3+(74\,\tau+94)\,\pi^2+(161\,\tau+154)\,\pi+56\,\tau+74,\\[0.2cm]
    a_{21} &=&  (128\,\tau+140)\,\pi^6+(45\,\tau+69)\,\pi^5+(110\,\tau+12)\,\pi^4+(112\,\tau+13)\,\pi^3+(60\,\tau+90)\,\pi^2+(10\,\tau+110)\,\pi+150\,\tau+159,\\
    a_{22}&=&  (120\,\tau+40)\,\pi^6+(77\,\tau+106)\,\pi^5+(36\,\tau+50)\,\pi^4+(89\,\tau+100)\,\pi^3+(130\,\tau+120)\,\pi^2+(64\,\tau+64)\,\pi+17\,\tau+73,\\
    a_{23}&=&   (142\,\tau+92)\,\pi^6+(44\,\tau+26)\,\pi^5+(156\,\tau+145)\,\pi^4+(88\,\tau+147)\,\pi^3+(72\,\tau+109)\,\pi^2+(104\,\tau+93)\,\pi+100\,\tau+69,\\[0.2cm]
    a_{31} &=&  (141\,\tau+27)\,\pi^6+(121\,\tau+39)\,\pi^5+(6\,\tau+103)\,\pi^4+(\tau+141)\,\pi^3+(51\,\tau+74)\,\pi^2+(79\,\tau+32)\,\pi+129\,\tau+39,\\
    a_{32} &=&  (39\,\tau+11)\,\pi^6+(8\,\tau+97)\,\pi^5+(155\,\tau+4)\,\pi^4+(92\,\tau+36)\,\pi^3+(103\,\tau+100)\,\pi^2+(66\,\tau+45)\,\pi+77\,\tau+13,\\
    a_{33} &=&  (154\,\tau+28)\,\pi^6+(77\,\tau+126)\,\pi^5+(124\,\tau+5)\,\pi^4+(151\,\tau+86)\,\pi^3+(2\,\tau+149)\,\pi^2+(141\,\tau+8)\,\pi+33\,\tau+105\,.\\
  \end{array}
\end{displaymath}
\end{footnotesize}
where
\begin{displaymath}
  \tau^2 + 12\,\tau + 2 = 0 + O(13^2)
  \text{ and }
  \pi^7  +\,13\cdot6\,( {\pi}^{2}+5\,\pi+8 )  ( {\pi}^{4}+2\,{\pi}^{3}+2\,{\pi}^{2}+11\,\pi+12 ) = 0 + O(13^2)\,.
\end{displaymath}

A \PHM~model modulo $13^2$ is then $Q^2 + \pi G=0$ where
\begin{displaymath}
  Q = (9\,\tau+3)\,x_1^2+(4\,\tau+2)\,x_1\,x_2+2\,x_1\,x_3+(8\,\tau+12)\,x_2^2+x_2\,x_3+O(13^2)
\end{displaymath}
and
\begin{footnotesize}
  \begin{displaymath}
    \setlength{\arraycolsep}{1pt}
  \begin{array}{ll}
  G = &O(13^2)+\\
    &((12\,\tau+11)\,\pi^6+(18\,\tau+74)\,\pi^5+(124\,\tau+120)\,\pi^4+(40\,\tau+94)\,\pi^3+(45\,\tau+105)\,\pi^2+(8\,\tau+61)\,\pi+145\,\tau+117)\,x_1^4+\\
    &((6\,\tau+122)\,\pi^6+(68\,\tau+30)\,\pi^5+(78\,\tau+152)\,\pi^4+(10\,\tau+142)\,\pi^3+(106\,\tau+38)\,\pi^2+(168\,\tau+47)\,\pi+162\,\tau+86)\,x_1^3\,x_2+\\
    &((23\,\tau+135)\,\pi^6+(103\,\tau+9)\,\pi^5+(6\,\tau+160)\,\pi^4+(20\,\tau+27)\,\pi^3+(2\,\tau+155)\,\pi^2+(146\,\tau+106)\,\pi+69\,\tau+37)\,x_1^3\,x_3+\\
    &((106\,\tau+65)\,\pi^6+(81\,\tau+26)\,\pi^5+(92\,\tau+8)\,\pi^4+(22\,\tau+99)\,\pi^3+(6\,\tau+133)\,\pi^2+(82\,\tau+76)\,\pi+46\,\tau+104)\,x_1^2\,x_2^2+\\
    &((116\,\tau+102)\,\pi^6+(83\,\tau+78)\,\pi^5+(60\,\tau+152)\,\pi^4+(97\,\tau+129)\,\pi^3+(106\,\tau+44)\,\pi^2+(92\,\tau+165)\,\pi+89\,\tau+66)\,x_1^2\,x_2\,x_3+\\
    &((135\,\tau+10)\,\pi^6+(125\,\tau+97)\,\pi^5+(18\,\tau+93)\,\pi^4+(104\,\tau+92)\,\pi^3+(130\,\tau+151)\,\pi^2+(156\,\tau+101)\,\pi+104+78\,\tau)\,x_1^2\,x_3^2+\\
    &((88\,\tau+77)\,\pi^6+(73\,\tau+109)\,\pi^5+(113\,\tau+68)\,\pi^4+(13\,\tau+12)\,\pi^3+(96\,\tau+96)\,\pi^2+(56\,\tau+125)\,\pi+82\,\tau+15)\,x_1\,x_2^3+\\
    &((53\,\tau+124)\,\pi^6+(112\,\tau+64)\,\pi^5+(123\,\tau+109)\,\pi^4+(145\,\tau+123)\,\pi^3+(77\,\tau+66)\,\pi^2+(18\,\tau+85)\,\pi+29\,\tau+96)\,x_1\,x_2^2\,x_3+\\
    &((8\,\tau+112)\,\pi^6+(20\,\tau+45)\,\pi^5+(73\,\tau+73)\,\pi^4+(96\,\tau+58)\,\pi^3+(138\,\tau+79)\,\pi^2+(130\,\tau+158)\,\pi+156)\,x_1\,x_2\,x_3^2+\\
    &((61\,\tau+84)\,\pi^6+(102\,\tau+138)\,\pi^5+(53\,\tau+52)\,\pi^4+(141\,\tau+40)\,\pi^3+(106\,\tau+77)\,\pi^2+143\,\pi+143\,\tau+130)\,x_1\,x_3^3+\\
    &((42\,\tau+92)\,\pi^6+(30\,\tau+144)\,\pi^5+(66\,\tau+40)\,\pi^4+(39\,\tau+156)\,\pi^3+(60\,\tau+104)\,\pi^2+(56\,\tau+48)\,\pi+53\,\tau+75)\,x_2^4+\\
    &((161\,\tau+125)\,\pi^6+(154\,\tau+139)\,\pi^5+(39\,\tau+40)\,\pi^4+(94\,\tau+40)\,\pi^3+(82\,\tau+106)\,\pi^2+(111\,\tau+156)\,\pi+35\,\tau+119)\,x_2^3\,x_3+\\
    &((164\,\tau+105)\,\pi^6+(22\,\tau+24)\,\pi^5+(82\,\tau+58)\,\pi^4+(105\,\tau+147)\,\pi^3+(116\,\tau+126)\,\pi^2+(143\,\tau+135)\,\pi+117\,\tau+13)\,x_2^2\,x_3^2+\\
    &((134\,\tau+47)\,\pi^6+(114\,\tau+145)\,\pi^5+(164\,\tau+64)\,\pi^4+(\tau+6)\,\pi^3+(38\,\tau+20)\,\pi^2+(78\,\tau+13)\,\pi+78\,\tau+52)\,x_2\,x_3^3+\\
    &((117\,\tau+165)\,\pi^6+(65\,\tau+28)\,\pi^5+(65\,\tau+52)\,\pi^4+(104\,\tau+104)\,\pi^3+13\,\tau\,\pi^2+(52\,\tau+13)\,\pi+78\,\tau+13)\,x_3^4\,.\\
  \end{array}
\end{displaymath}
\end{footnotesize}
\end{example}
The Dixmier-Ohno invariants of this model modulo $13^2$ coincide with the Dixmier-Ohno invariants of the input curve. This proves the correctness of the output with the given precision. 

\begin{remark}
{In order to automatized completely this procedure, one would need packages to work with algebraic systems over the $p$-adic in a transparent way with control of the errors. Although there are results and fast progress on this topic \cite{Lebreton12, CRV14, CRV15, CRV18, Lebreton15}, such functionalities are still not fully implemented.}
\end{remark}




\bibliographystyle{alphaabbr}

\bibliography{synthbib}

\end{document}